\providecommand{\U}[1]{\protect\rule{.1in}{.1in}}
\DeclarePairedDelimiter\floor{\lfloor}{\rfloor}
\newtheorem{theorem}{Theorem}
\newtheorem{assumption}{Assumption}
\theoremstyle{example}
\newtheorem{lemma}{Lemma}
\newtheorem{proposition}{Proposition}
\newtheorem{remark}{Remark}
\theoremstyle{remark}
\newenvironment{proof}[1][Proof]{\noindent\textbf{#1.} }{\ \rule{0.5em}{0.5em}}
\begin{document}

\title{Exact Sampling of the Infinite Horizon Maximum of a Random Walk Over a
Non-linear Boundary}
\author{Blanchet, J.\thanks{Support from NSF grant DMS-132055 is gratefully acknowledged by J.
Blanchet.} \and Dong, J.  \and Liu, Z.}

\maketitle
\begin{abstract}
We present the first algorithm that samples 
$\max_{n\geq0}\{S_{n}-n^{\alpha}\},$
where $S_n$ is a mean zero random walk, 
and $n^{\alpha}$ with $\alpha\in(1/2,1)$ defines a nonlinear boundary. 
We show that our algorithm has finite expected running time. We also apply this
algorithm to construct the first exact simulation method for the steady-state departure process of a $GI/GI/\infty$
queue where the service time distribution has infinite mean.

\end{abstract}

\section{Introduction}

Consider a random walk $S_{n}=\sum_{i=1}^{n}X_{i}$ for $n\geq1$ and $S_{0}=0$,
where $\{X_{i}:i\geq1\}$ is a sequence of independent and identically
distributed random variables with $E[X_{1}]=0$. Without loss of generality, we
shall also assume that $Var(X_{1})=1$. Moreover, we shall impose the following
light-tail assumption on the distribution of $X_{i}$'s.

\begin{assumption}
\label{A1}There exists $\delta>0$, such that $E[\exp(\theta X_{1})]<\infty$
for $\forall \theta\in(-\delta,\delta)$.
\end{assumption}

In this paper, we develop the first algorithm that generates
perfect samples (i.e. samples without any bias) from the random variable
\[
M_{\alpha}=\max_{n\geq0}\{S_{n}-n^{\alpha}\},
\]
where $\alpha\in(1/2,1)$. Moreover, we will show that our algorithm has
finite expected running time. \newline

There has been substantial amount of work on exact sampling (i.e. sampling
without any bias) from the distribution of the maximum of a negative drifted
random walk, e.g. $M_{1}$ in our setting. Ensor and Glynn \cite{EnsGlyn:2000}
propose an algorithm to sample the maximum when the increments of the random
walk are light-tailed (i.e Assumption \ref{A1} holds). In \cite{BlanChen:2012}, Blanchet et al. propose an algorithm to simulate a multidimensional version of $M_{1}$ driven by Markov random walks. In \cite{BlanWall:2014}, Blanchet and Wallwater develop an algorithm to sample $M_1$ for the heavy-tailed case, which
requires only that $E\left\vert X_{1}\right\vert ^{2+\varepsilon}<\infty$ for
some $\varepsilon>0$ to guarantee finite expected termination time.

Some of this work is motivated by the fact that $M_{1}$ plays an important
role in ruin theory and queueing models. For example, the
steady state waiting time of $GI/GI/1$ queue has the same distribution as
$M_{1}$, where $X_{i}$ corresponds to the (centered) difference between
the $i$-th service time and the $i$-th interarrival time, (see \cite{Asm:2003}). Moreover, 
applying Coupling From The Past (CFTP), see for example
\cite{PropWil:1996} and \cite{Kendall:1998}, the techniques to sample
$M_{1}$ jointly with the random walk $\left\{  S_{n}:n\geq0\right\}  $ have
been used to obtain perfect sampling algorithms for more general queueing
systems, including multi-server queues \cite{BlanDonPei:2015}, infinite
server queues and loss networks \cite{BlanDong:2014}, and multidimensional
reflected Brownian motion with oblique reflection \cite{BlanChen:2012}.

The fact that $M_{\alpha}$ stochastically dominates $M_{1}$ makes the
development of a perfect sampler for $M_{\alpha}$ more difficult. For example,
the direct use of exponential tilting techniques as in
\cite{EnsGlyn:2000} is not applicable. However, similar to some of the
previous work, the algorithmic development uses the idea of record-breakers (see e.g. \cite{BlanDong:2014})
and randomization procedures
similar to the heavy-tailed context studied in \cite{BlanWall:2014}.

The techniques that we study here can be easily extended, using the techniques
studied in \cite{BlanChen:2012}, to obtain exact samplers of a
multidimensional analogue of $M_{\alpha}$ driven by Markov random walks (as
done in \cite{BlanChen:2012} for the case $\alpha=1$). Moreover, using the
domination technique introduced in Section 5 of \cite{BlanDonPei:2015}, the
algorithms that we present here can be applied to the case in which the
term $n^{\alpha}$ is replaced by $g\left(  n\right)  $ as long as there exists
$n_{0}>0$ such that $g\left(  n\right)  \geq n^{\alpha}$ for all $n\geq n_{0}$.

We mentioned earlier that algorithms which simulate $M_{1}$ jointly with
$\left\{  S_{n}:n\geq0\right\}  $ have been used in applications of CFTP.
Since the random variable $M_{\alpha}$ dominates $M_{1}$, and we also simulate
$M_{\alpha}$ jointly with $\{S_{n}:n\geq0\}$, we expect our results here to be
applicable to perfect sampling (using CFTP) for a wide range of processes. In
this paper, we will show how to use the ability to simulate $M_{\alpha}$
jointly with $\{S_{n}:n\geq0\}$ to obtain the first algorithm which samples
from the steady-state departure process of an infinite server queue in which
the job requirements have infinite mean; the case of finite mean service/job
requirements is treated in \cite{BlanDong:2014}.

The rest of the paper is organized as follows. In Section \ref{sec:main} we
discuss our sampling strategy. Then we provide a detailed running time
analysis in Section \ref{Sec_Running_Time}. As a sanity check, we implement
our algorithm in instances in which we can compute a theoretical benchmark,
this is reported in Section \ref{sec:num}. Finally, the application to exact
simulation of the steady-state departure process of an infinite server queue
with infinite mean service time is given in Section \ref{sec:queue}.

\section{Sampling strategy and main algorithmic development}

\label{sec:main} Our goal is to simulate $M_{\alpha}$ using a finite but
random number of $X_{i}$'s. To achieve this goal, we introduce the idea of
record-breakers.

Let $\psi(\theta):=\log E[\exp(\theta X_{i})]$. As $\psi(\theta)=\frac{1}%
{2}\theta^{2}+ o(\theta^{2})$, by Taylor expansion, there exists
$\delta^{\prime}<\delta$, such that $\psi(\theta)\leq\theta^{2}$, for
$\theta\in(-\delta^{\prime}, \delta^{\prime})$. Let
\begin{equation}
\label{eq:par}a< \min\left\{  4\delta^{\prime},\frac{1}{2}\right\}  ,
~~~\mbox{ and }~~~ b=\frac{4}{a}\log\left(  4\left(  \sum_{n=0}^{\infty}2^{n}
\exp(-a^{2} 2^{2n\alpha-n-4}\right)  \right).
\end{equation}
These choices of $a$ and $b$ will become clear in the proof of Lemma
\ref{lm:kappa}. We define a sequence of record-breaking times as $T_{0}:=0$.
For $k=1, 2\dots$, if $T_{k-1} <\infty$,
\[
T_{k}:=\inf\left\{  n>T_{k-1}: S_{n} > S_{T_{k-1}}+a (n-T_{k-1})^{\alpha} + b
(n-T_{k-1})^{1-\alpha}\right\}  ;
\]
otherwise if $T_{k-1}=\infty$, then $T_{k}=\infty$. We also define
\[
\kappa:=\inf\{k>0: T_{k}=\infty\}.
\]
Because the random walk has independent increments, $P(T_{i}=\infty|T_{i-1}%
<\infty)=P(T_{1}=\infty)$. Thus, $\kappa$ is a geometric random variable with
probability of success
\[
P(T_{1}=\infty).
\]
We first show that $\kappa$ is well defined.

\begin{lemma}
\label{lm:kappa} For $a$ and $b$ satisfying \eqref{eq:par},
\[
P(T_{1}=\infty) \geq\frac{3}{4}.
\]

\end{lemma}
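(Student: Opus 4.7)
The plan is to show the equivalent upper bound $P(T_1<\infty)\leq 1/4$, which by the definition of $T_1$ is
$$P\bigl(\exists\,m\geq 1:\ S_m > am^\alpha + bm^{1-\alpha}\bigr)\leq 1/4.$$
I would attack this with a Chernoff bound at each $m$ using a carefully tuned exponential tilt, combined with a dyadic union bound engineered to produce exactly the series that appears in \eqref{eq:par}.

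For fixed $m\geq 1$, applying the exponential Markov inequality with tilt $\theta\in(0,\delta')$ and then the stated Taylor bound $\psi(\theta)\leq\theta^2$ on $(-\delta',\delta')$ yields
$$P\bigl(S_m>am^\alpha+bm^{1-\alpha}\bigr) \leq \exp\!\bigl(-\theta(am^\alpha+bm^{1-\alpha}) + m\theta^2\bigr).$$
The pivotal calibration is to pick $\theta=\theta_m:=am^{\alpha-1}/4$. Since $a<4\delta'$ and $m^{\alpha-1}\leq 1$ (as $\alpha<1$ and $m\geq 1$), this tilt lies in $(0,\delta')$, so the bound is valid. Substituting gives a linear term $\theta_m\cdot am^\alpha=a^2 m^{2\alpha-1}/4$, a quadratic residual $m\theta_m^2=a^2 m^{2\alpha-1}/16$, and a cross-offset $\theta_m\cdot bm^{1-\alpha}=ab/4$, so the exponent is bounded above by $-a^2 m^{2\alpha-1}/16-ab/4$.

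Next, I would partition $\{1,2,\ldots\}$ into the dyadic blocks $[2^n,2^{n+1})$, $n\geq 0$. Since $2\alpha-1>0$, on such a block $m^{2\alpha-1}\geq 2^{n(2\alpha-1)}$, and a union bound over the $2^n$ integers in the block gives
$$P\bigl(\exists\,m\in[2^n,2^{n+1}):\ S_m>am^\alpha+bm^{1-\alpha}\bigr) \leq 2^n\exp\!\bigl(-a^2\cdot 2^{2n\alpha-n-4}\bigr)\,e^{-ab/4}.$$
Summing over $n\geq 0$ and invoking the definition of $b$ in \eqref{eq:par},
$$P(T_1<\infty) \leq e^{-ab/4}\sum_{n=0}^{\infty}2^n\exp\!\bigl(-a^2\cdot 2^{2n\alpha-n-4}\bigr) = \frac{1}{4};$$
the series converges because $\alpha>1/2$ makes $2^{(2\alpha-1)n}$ grow exponentially in $n$, dominating the $2^n$ prefactor.

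The main obstacle is the calibration of $\theta_m$. The constraint $\theta_m<\delta'$ together with $a<4\delta'$ forces the denominator $4$ in $\theta_m=am^{\alpha-1}/4$; it is precisely this $1/4$ that produces the cross-term offset $-ab/4$, which is then absorbed into the definition of $b$ in \eqref{eq:par}. One must simultaneously check that the residual quadratic $m\theta_m^2$ is small enough compared to the linear term to leave an exponent still decaying like $m^{2\alpha-1}$, so that per block this decay beats the $2^n$ union-bound penalty.
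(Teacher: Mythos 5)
Your proposal is correct and uses essentially the same strategy as the paper: a Chernoff bound with a tuned tilt of size roughly $a m^{\alpha-1}/4$, followed by a union bound over dyadic blocks $[2^n,2^{n+1})$, with $b$ chosen precisely to absorb the cross-term $ab/4$ into a final factor of $1/4$. The only cosmetic difference is that you optimize the tilt $\theta_m$ separately for each $m$, whereas the paper fixes a single tilt $\theta_n = a 2^{(\alpha-1)n-2}$ per dyadic block; both calibrations coincide at $m=2^n$ and yield the same final bound.
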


\begin{proof}
We first notice that
\begin{align*}
P(T_{1}<\infty)  &  =\sum_{n=0}^{\infty}P(T\in\lbrack2^{n},2^{n+1}))\\
&  \le\sum_{n=0}^{\infty}\sum_{k\in\lbrack2^{n},2^{n+1})}P(S_{k}>ak^{\alpha
}+bk^{1-\alpha}).
\end{align*}
For any $k\in\lbrack2^{n},2^{n+1})$,
\begin{align*}
P(S_{k}>ak^{\alpha}+bk^{1-\alpha})  &  \leq\exp\left(  k\psi(\theta
)-\theta(ak^{\alpha}+bk^{1-\alpha})\right) \\
&  \leq\exp\left(  2^{n+1}\psi(\theta)-\theta a2^{\alpha n}-\theta
b2^{(1-\alpha)n}\right),
\end{align*}
for any $\theta\in(-\delta,\delta)$. If we set $\theta_{n}=a2^{(\alpha-1)n-2}%
$, as $a<4\delta^{\prime}$, $\theta_{n}<\delta^{\prime}$. Then
\begin{align*}
P\left(  S_{k}>ak^{\alpha}+bk^{1-\alpha}\right)   &  \leq\exp\left(
2^{n+1}\theta_{n}^{2}-\theta_{n}a2^{\alpha n}-\theta_{n}b2^{(1-\alpha
)n}\right) \\
&  =\exp\left(  -a^{2}2^{2n\alpha-n-3}-ab/4\right)  .
\end{align*}
Therefore,
\[
P(T_{1}<\infty)\leq\left(  \sum_{n=0}^{\infty}2^{n}\exp\left(  -a^{2}%
2^{2n\alpha-n-3}\right)  \right)  \exp\left(  -\frac{ab}{4}\right)  \leq
\frac{1}{4},
\]
where the last inequality follows from our choice of $b$.
\end{proof}

\bigskip

Let
\begin{equation}
\label{eq:xi}\xi:= \max_{n \ge 0}\left\{  (an^{\alpha}+bn^{1-\alpha})-\frac{1}%
{2}n^{\alpha} \right\}  .
\end{equation}
Conditional on the value of $\kappa$ and the values of $\{X_{i}: 1\leq i\leq
T_{\kappa-1}\}$, we define
\[
\Gamma(\kappa) := \left\lceil (2S_{T_{\kappa-1}}+2\xi)^{1/\alpha} \right\rceil
.
\]
The choice of $\xi$ will become clear in the proof of Lemma \ref{lm:rem}. We will next establish that
\[
M_{\alpha}=\max_{0\leq n\leq T_{\kappa-1}+\Gamma(\kappa)}\{S_{n}-n^{\alpha
}\}.
\]

\begin{lemma}
\label{lm:rem} For $n \geq T_{\kappa-1}+\Gamma(\kappa)$,
\[
S_{n} \leq n^{\alpha}.
\]

\end{lemma}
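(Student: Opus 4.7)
The plan is to chain three observations corresponding to the three definitions $\kappa$, $\xi$, and $\Gamma(\kappa)$, verifying that their constants have been calibrated so the inequalities compose. The only property of $\kappa$ I use is that $T_\kappa = \infty$; unpacking the definition of $T_k$, this means that no index $m > T_{\kappa-1}$ breaks the record set at $T_{\kappa-1}$, so
\[ S_m \leq S_{T_{\kappa-1}} + a(m - T_{\kappa-1})^{\alpha} + b(m - T_{\kappa-1})^{1-\alpha} \qquad \text{for all } m > T_{\kappa-1}. \]
This is the one structural fact that controls how fast $S_n$ can grow past the last record-breaking time.

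Next, for $n \geq T_{\kappa-1} + \Gamma(\kappa)$, set $\Delta := n - T_{\kappa-1}$, so $\Delta \geq \Gamma(\kappa)$. Applying the definition of $\xi$ at $\Delta$ gives
\[ a\Delta^{\alpha} + b\Delta^{1-\alpha} \leq \tfrac{1}{2}\Delta^{\alpha} + \xi, \]
and the definition $\Gamma(\kappa) = \lceil (2 S_{T_{\kappa-1}} + 2\xi)^{1/\alpha} \rceil$ gives $\Delta^{\alpha} \geq 2 S_{T_{\kappa-1}} + 2\xi$. Note that the base $2 S_{T_{\kappa-1}} + 2\xi$ is nonnegative: $\xi \geq 0$ by inspection of \eqref{eq:xi} at $n = 0$, and $S_{T_{\kappa-1}} \geq 0$ since either $\kappa = 1$ and $S_{T_0} = 0$, or $\kappa \geq 2$ and an induction from $S_0 = 0$ along the (strict) record-breaking times gives $S_{T_{\kappa-1}} > 0$.

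Putting the pieces together,
\[ S_n \leq S_{T_{\kappa-1}} + a\Delta^{\alpha} + b\Delta^{1-\alpha} \leq S_{T_{\kappa-1}} + \tfrac{1}{2}\Delta^{\alpha} + \xi \leq \tfrac{1}{2}\Delta^{\alpha} + \tfrac{1}{2}\Delta^{\alpha} = \Delta^{\alpha} \leq n^{\alpha}, \]
where the last step uses $\Delta \leq n$ and $\alpha > 0$. No real obstacle arises; the argument is in effect a consistency check that $\xi$ and $\Gamma(\kappa)$ were defined precisely to make these three inequalities compose. The one design choice worth noting is the split of $a\Delta^{\alpha} + b\Delta^{1-\alpha}$ into a dominant $\tfrac{1}{2}\Delta^{\alpha}$ piece plus the uniform excess $\xi$, which sidesteps having to compare with $(T_{\kappa-1} + \Delta)^{\alpha}$ directly — subadditivity of $x \mapsto x^{\alpha}$ for $\alpha \in (0,1)$ would push that comparison in the wrong direction.
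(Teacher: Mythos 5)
Your proof is correct and follows essentially the same chain of inequalities as the paper's: bound $S_n$ past the last record-breaker by $S_{T_{\kappa-1}} + a\Delta^\alpha + b\Delta^{1-\alpha}$, replace $a\Delta^\alpha + b\Delta^{1-\alpha}$ by $\frac{1}{2}\Delta^\alpha + \xi$, use $\Delta \geq \Gamma(\kappa)$ to absorb $S_{T_{\kappa-1}} + \xi$ into another $\frac{1}{2}\Delta^\alpha$, and finish with $\Delta^\alpha \leq n^\alpha$. Your explicit check that $2S_{T_{\kappa-1}} + 2\xi \geq 0$ is a small rigor addition the paper leaves implicit, but the argument is otherwise the same.
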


\begin{proof}
For $\xi$ defined in \eqref{eq:xi}, we have for any $n\geq0$,
\[
an^{\alpha}+bn^{1-\alpha}\leq\frac{1}{2}n^{\alpha}+\xi.
\]
Since $T_{\kappa}= \infty$, for $n \geq\Gamma(\kappa)$,
\begin{align*}
S_{T_{\kappa-1}+n}  &  \leq an^{\alpha}+bn^{1-\alpha}+S_{T_{\kappa-1}}\\
&  \leq\frac{1}{2}n^{\alpha}+\xi+\frac{1}{2} \Gamma(\kappa)^{\alpha} - \xi\\
&  \leq n^{\alpha}
\leq(T_{\kappa-1}+n)^{\alpha}.
\end{align*}

\end{proof}

\bigskip


Figure \ref{fig:alg} illustrates the basic idea of our algorithmic
development. The solid line is $n^{\alpha}$. The first dotted line from the
left (lowest dotted curve) is the record-breaking boundary that we start with,
$an^{\alpha-1}+bn^{\alpha}$. $T_{1}$ is the first record-breaking time. Based
on the value of $S_{T_{1}}$, we construct a new record-breaking boundary,
$S_{T_{1}}+a(n-T_{1})^{\alpha-1}+b(n-T_{1})^{\alpha}$ (the second dotted line
from the left). At time $T_{2}$, we have another record-breaker. Based on the
value of $S_{T_{2}}$, we construct again a new record-breaking boundary,
$S_{T_{2}}+a(n-T_{2})^{\alpha-1}+b(n-T_{2})^{\alpha}$ (the third dotted line
from the left). If from $T_{2}$ on, we will never break the record again
($T_{3}=\infty$), then we know that for $n$ large enough (say, $n>3000$ in the
figure), $S_{n}$ will never pass the solid boundary again.

\begin{figure}[tbh]
\caption{Bounds for record-breakers}%
\label{fig:alg}%
\centering
\includegraphics[width=0.5\textwidth]{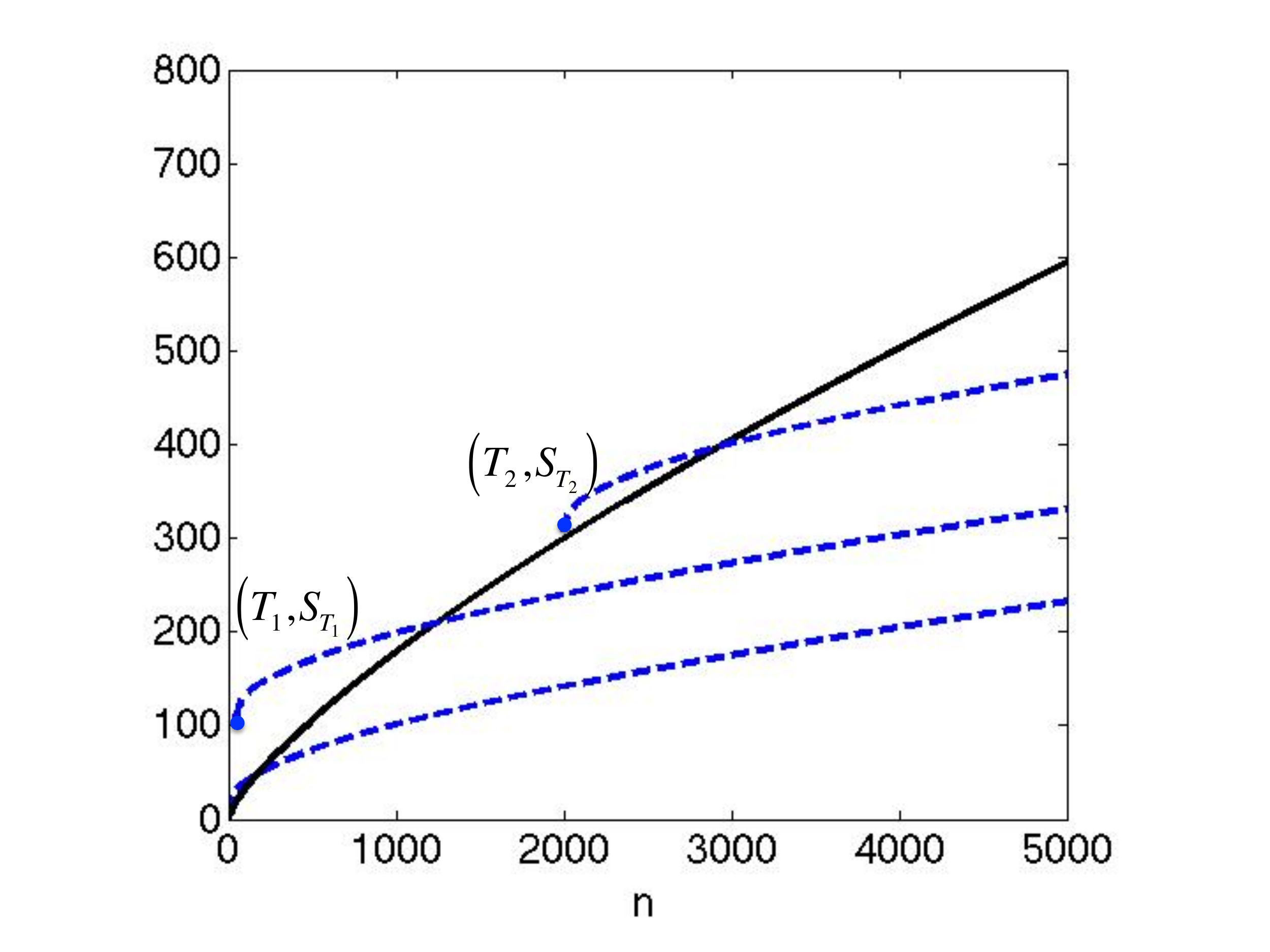}\end{figure}

The actual simulation strategy goes as follows.\newline

\noindent\textbf{Algorithm 1.} Sampling $\Gamma(\kappa)$ together with
$(X_{i}: 1\leq i\leq T_{\kappa-1}+\Gamma(\kappa))$.

\begin{enumerate}
\item[i)] Initialize $T_{0}=0$, $k=1$.

\item[ii)] For $T_{k-1}<\infty$, sample $J \sim$Bernoulli$(P(T_{k}=\infty|T_{k-1})).$

\item[iii)] If $J=0$, sample $(X_{i}: i=T_{k-1}+1, \dots, T_{k})$ conditional
on $T_{k}<\infty$. Set $k=k+1$ and go back to step ii); otherwise ($J=1$), set
$\kappa=k$ and go to step iv).

\item[iv)] Calculate $\Gamma(\kappa)$, sample $(X_{i}: i=T_{k-1}+1, \dots,
T_{k-1} + \Gamma(\kappa))$ conditional on $T_{k}=\infty$.
\end{enumerate}

\bigskip

\begin{remark}
\label{rem_ab}
In general, any $a<\min\left\{  4\delta^{\prime},1/2\right\}  $, and
$b\geq\frac{4}{a}\log\left(  4\left(  \sum_{n=0}^{\infty}2^{n}\exp(-a^{2}%
2^{2n\alpha-n-4}\right)  \right)  $ would work. However, there is a trade-off.
The larger the value of $a$ and $b$, the smaller the value of $\kappa$,
but the value of $\Gamma(\kappa)$ would be larger. We conduct a numerical
study on the choice of these parameters in Section \ref{sec:num1}.
\end{remark}

In what follows, we shall elaborate on how to carry out step ii), iii) and iv)
in Algorithm 1. In particular, step ii) and iii) are outlined in Procedure A.
Step iv) is outlined in Procedure B.

\subsection{Step ii) and iii) in Algorithm 1}

\label{sec:step1} It turns out step ii) and iii) can be carried out
simultaneously using exponential tilting based on the results and proof of Lemma
\ref{lm:kappa}.

We start by explaining how to sample the first record-breaking time $T_{1}$. We introduce an
auxiliary random variable $N$ with probability mass function (pmf)
\begin{equation}
\label{eq1}p(n)=P\left(  N=n\right)  =\frac{2^{n} \exp\left(  -a^{2}
2^{2n\alpha-n -3} \right)  }{\sum_{m=0}^{\infty}2^{m} \exp\left(  -a^{2}
2^{2m\alpha-m -3} \right)  }, ~~~\mbox{ for $n\geq1$}
\end{equation}
We can then apply exponential tilting to sample the path $(X_{1},X_{2} \dots, X_{T_{1}%
})$ conditional on $T_{1}<\infty$. The actual sampling algorithm goes as
follows.\newline

\noindent\textbf{Procedure A.} Sampling $J$ with $P(J=1)=P(T_{1}=\infty)$; if
$J=0$, output $(X_{1}, \dots, X_{T_{1}})$.

\begin{enumerate}
\item[i)] Sample a random time $N$ with pmf \eqref{eq1}.

\item[ii)] Let $\theta_{N}=a2^{N(\alpha-1)-2}$. Generate $X_{1}, X_{2}, \dots,
X_{2^{N+1}-1}$ under exponential tilting with tilting parameter $\theta_{N}$,
i.e.
\[
\frac{dP_{\theta_{N}}}{dP}1\{X_{i} \in A\}=\exp(\theta_{N} X_{i}-\psi
(\theta_{N}))1\{X_{i}\in A\}.
\]
Let $T_{1}=\inf\{n\geq1: S_{n}>an^{\alpha}+bn^{1-\alpha}\} \wedge2^{N+1}$

\item[iii)] Sample $U\sim$Uniform$[0,1]$. If
\[
U\leq\frac{\exp(-\theta_{N} S_{T_{1}} + T_{1}\psi(\theta_{N}))}{p(N)}I\left\{
T_{1}\in\left[  2^{N}, 2^{N+1}\right)  \right\}  ,
\]
then set $J=0$ and output $(X_{1}, X_{2}, \dots, X_{T_{1}})$; else, set
$J=1$.\newline
\end{enumerate}

We next show that Procedure A works.

\begin{theorem}
\label{th:step1} In Procedure A, $J$ is a Bernoulli random variable with
probability of success $P(T_{1}=\infty)$. If $J=0$, the output $(X_{1}, X_{2},
\dots, X_{T_{1}})$ follows the distribution of $(X_{1}, X_{2}, \dots,
X_{T_{1}})$ conditional on $T_{1}<\infty$.
\end{theorem}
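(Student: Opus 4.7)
The plan is to view Procedure A as a standard acceptance--rejection scheme built on top of the exponential change of measure used inside Lemma \ref{lm:kappa}. First I would condition on $N=n$ and use the fact that, under the tilted measure $P_{\theta_n}$, the random walk still has i.i.d.\ increments, with Radon--Nikodym derivative $\exp(\theta_n S_k - k\psi(\theta_n))$ on $\mathcal{F}_k=\sigma(X_1,\ldots,X_k)$. Applied to the bounded stopping time $T_1\wedge 2^{n+1}$, this gives, for any bounded measurable $f$,
\[
E_{\theta_n}\!\left[f(X_1,\ldots,X_{T_1})\,e^{-\theta_n S_{T_1}+T_1\psi(\theta_n)}\mathbf{1}\{T_1\in[2^n,2^{n+1})\}\right]=E\!\left[f(X_1,\ldots,X_{T_1})\mathbf{1}\{T_1\in[2^n,2^{n+1})\}\right].
\]

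Next I would use this identity, together with the definition of the acceptance test in step iii), to show that for every measurable set $A$ of paths,
\[
P\bigl(J=0,\,(X_1,\ldots,X_{T_1})\in A\,\big|\,N=n\bigr)=\frac{1}{p(n)}\,P\bigl(T_1\in[2^n,2^{n+1}),\,(X_1,\ldots,X_{T_1})\in A\bigr).
\]
Multiplying by $p(n)=P(N=n)$, summing over $n$, and noting that the intervals $[2^n,2^{n+1})$ partition $\{T_1<\infty\}$, I obtain $P(J=0,(X_1,\ldots,X_{T_1})\in A)=P(T_1<\infty,(X_1,\ldots,X_{T_1})\in A)$. Setting $A$ to be the whole path space gives $P(J=0)=P(T_1<\infty)$, and dividing the two identities gives the conditional law of the output given $J=0$.

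The main obstacle is verifying that the quantity
\[
\frac{\exp\bigl(-\theta_N S_{T_1}+T_1\psi(\theta_N)\bigr)}{p(N)}\,\mathbf{1}\{T_1\in[2^N,2^{N+1})\}
\]
actually lies in $[0,1]$, so that the comparison with the uniform $U$ produces a legitimate Bernoulli draw. For this I would recycle the computation behind Lemma \ref{lm:kappa}: on $\{T_1\in[2^n,2^{n+1})\}$ one has $S_{T_1}\ge a\,2^{n\alpha}+b\,2^{(1-\alpha)n}$ and $T_1\le 2^{n+1}$, and combined with $\psi(\theta_n)\le\theta_n^2$ and $\theta_n=a\,2^{(\alpha-1)n-2}$ this yields
\[
\exp\bigl(-\theta_n S_{T_1}+T_1\psi(\theta_n)\bigr)\le\exp\bigl(-a^2 2^{2n\alpha-n-3}-ab/4\bigr).
\]
Plugging in the calibration of $b$ from \eqref{eq:par}, one checks that the ratio collapses to $1/(4\cdot 2^n)\le 1/4$, so the desired boundedness is precisely what the choice of constants $a$ and $b$ was designed to secure.
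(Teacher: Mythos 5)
Your proposal follows essentially the same route as the paper's proof: verify that the acceptance ratio is bounded by $1$ (in fact by $1/4$) via the same exponential-tilting estimate used in Lemma~\ref{lm:kappa} and the calibration of $b$ in \eqref{eq:par}, then unwind the change of measure at the stopping time to recover $P(J=0)=P(T_1<\infty)$ and the correct conditional law of the path. The only cosmetic difference is that you phrase the key identity once for a general test function $f$ via Wald's/optional-stopping form of the likelihood ratio at $T_1\wedge 2^{n+1}$, whereas the paper first computes $P(J=0)$ and then separately verifies the finite-dimensional cylinder probabilities for the output; the two are equivalent. One minor imprecision: the ratio does not ``collapse to'' $1/(4\cdot 2^n)$ exactly, but is bounded above by $2^{-n}\exp(-ab/4)\sum_{m\geq 0}2^m\exp(-a^2 2^{2m\alpha-m-3})\leq 2^{-n}/4\leq 1/4$; the conclusion is unchanged.
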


\begin{proof}
We first show that the likelihood ratio in step iii) is well-defined.
\begin{align*}
&  \frac{\exp(-\theta_{n} S_{T_{1}} + T_{1} \psi(\theta_{n}))}{P\left(
N=n\right)  } I\{T_{1}\in[2^{n}, 2^{n+1} )\}\\
\le &  \frac{\exp(-\theta_{n} (a 2^{\alpha n}+b 2^{(1-\alpha)n})+ 2^{n+1}
\theta_{n}^{2}))}{P(N=n)}\\
=  &  \frac{\exp\left(  -a^{2} 2^{2n\alpha-n -3}-ab/4 \right)  }{P(N=n)}\\
=  &  2^{-n} \exp(-ab/4) \sum_{m=0}^{\infty}2^{m} \exp(-a^{2} 2^{2m\alpha-m
-3})
\le \frac{1}{4},
\end{align*}
where the last inequality follows from our choice of $b$ as in the proof of
Lemma \ref{lm:kappa}. We next prove that $P(J=0)=P(T_{1}<\infty)$.
\begin{align*}
E[I\{J=0\}|N=n]  &  =E_{\theta_{n}}\left[  I\left\{  U\leq\frac{\exp
(-\theta_{n} S_{T_{1}} + T_{1}\psi(\theta_{n}))}{p(n)}\right\}  I\{T_{1}%
\in[2^{n}, 2^{n-1})\}\right] \\
&  = E_{\theta_{n}}\left[  \frac{\exp(-\theta_{n} S_{T_{1}} + T_{1}\psi
(\theta_{n}))}{p(n)}I\{T_{1}\in[2^{n}, 2^{n+1})\}\right] \\
&  =\frac{P(T_{1}\in[2^{n}, 2^{n+1}))}{p(n)}.%
\end{align*}
Thus,
\begin{align*}
E[I\{J=0\}]  &  =\sum_{n=0}^{\infty}E[I\{J=0\}|N=n]p(n)\\
&  = \sum_{n=0}^{\infty}P(T_{1}\in[2^{n}, 2^{n+1}))=P(T_{1}<\infty).
\end{align*}
Let $P^{*}(\cdot)$ denote the measure induced by Procedure A. We next show
that $P^{*}(X_{1} \in A_{1}, \ldots, X_{t} \in A_{t} | J=0 )=P(X_{1}\in A_{1},
\ldots, X_{t} \in A_{t} |T<\infty),$ where $t$ is a positive integer, and
$A_{i}\subset\mathbb{R}$, $i=1,2,\dots, t$, is a sequence of Borel measurable
sets satisfying $A_{i} \subset\{x\in \mathbb R: x\leq ai^{\alpha}+bi^{1-\alpha}\}$ for
$i<t$ and $A_{t} \subset\{x \in \mathbb R: x>at^{\alpha}+bt^{1-\alpha}\}$. Let
$n_{t}:=\floor{\log_2 t}$.
\begin{align*}
&  P^{*}(X_{1} \in A_{1}, \ldots, X_{t} \in A_{t} | J=0 )\\
=  &  \frac{P^{*}(X_{1} \in A_{1}, \ldots, X_{t} \in A_{t} , J=0 )}{P(J=0)}\\
=  &  \frac{P(N=n_{t})}{P(T_{1}<\infty)}E_{\theta_{n_{t}}} \left[  I \left\{
X_{1} \in A_{1}, \ldots, X_{t} \in A_{t} \right\}  I \left\{  U \leq\frac
{\exp(-\theta_{n_{t}} S_{t}+t \psi(\theta_{n_{t}}))}{p(n_{t})} \right\}
\right] \\
=  &  \frac{p(n_{t})}{P(T_{1}<\infty)} E_{\theta_{n_{t}}}\left[  I \left\{
X_{1} \in A_{1}, \ldots, X_{t} \in A_{t} \right\}  \frac{\exp(-\theta_{n_{t}}
S_{t}+t \psi(\theta_{n_{t}}))}{p(n_{t})} \right] \\
=  &  \frac{E\left[  I \left\{  X_{1} \in A_{1}, \ldots, X_{t} \in A_{t}
\right\}  \right]  }{P(T_{1}<\infty)}\\
=  &  P(X_{1}\in A_{1}, \ldots, X_{t} \in A_{t} |T_{1}<\infty).
\end{align*}

\end{proof}

The extension from $T_{1}$ to $T_{k}$ is straightforward: because for
$T_{k-1}<\infty$, given the value of $T_{k-1}$ and $S_{T_{k-1}}$, we
essentially start the random walk afresh from each $T_{k-1}$ on. Thus, to
execute step ii) and iii) in Algorithm 1, given $T_{k-1}<\infty$, we can apply
Procedure A. Based on the output, if $J=0$, we denote $(\tilde{X}_{1},\tilde{X}_{2},\dots,\tilde
{X}_{T})$ as the output from Procedure A, and set $(X_{T_{k-1}+1}%
,\dots,X_{T_{k-1}+T})=(\tilde{X}_{1},\dots,\tilde{X}_{T})$ and $T_{k}%
=T_{k-1}+T$, otherwise, set $\kappa=k$.

\subsection{Step iv) in Algorithm 1}

Sampling $(X_{1}, \dots, X_{T_{\kappa-1}})$ is realized by iteratively
applying Procedure A until it outputs $J=1$. Once we found $\kappa$, sampling
$(X_{T_{\kappa-1}+1}, \dots, X_{T_{\kappa-1}+n})$ requires us to sample the
trajectory of the random walk conditioning on that it never passes the
non-linear upper bound. In particular, given $\kappa=k$, we would like to
sample $(X_{T_{\kappa-1}+1}, \dots, X_{T_{\kappa-1}+n})$ from $P(\cdot|
\mathcal{F}_{k-1}, T_{k}=\infty)$, where $\{\mathcal{F}_{k}: k \geq0\}$ denote
the filtration generated by the random walk. We can achieve this conditional
sampling using the acceptance-rejection technique.

We first introduce a method to simulate a Bernoulli random variable with
probability of success $P(T_{1}=\infty|T_{1}>t,S_{t})$, which follows a
similar exponential tilting idea as that used in Section \ref{sec:step1}.

Let
\[
\tilde T_{t, s}:=\inf\left\{  n \geq0: s + S_{n} > a(n+t)^{\alpha
}+b(n+t)^{1-\alpha}\right\}  .
\]
Given $t$, we introduce an auxiliary random variable $\tilde N(t)$ with pmf
\begin{equation}
\label{eq2}p_{t}(n)=P\left(  \tilde N(t)=n\right)  =\frac{2^{n} \exp\left(
-2^{-n-4}a^{2} (2^{n}+t)^{2\alpha} \right)  }{\sum_{m=0}^{\infty}2^{m}
\exp\left(  -2^{-m-4}a^{2} (2^{m}+t)^{2\alpha} \right)  },
~~~\mbox{ for $n\geq1$}.
\end{equation}
Given $\tilde N(t)=n$, we apply exponential tilting to sample $(\tilde X_{1},
\tilde X_{2}, \dots, \tilde X_{2^{n+1}-1})$, with tilting parameter
\[
\tilde\theta_{n}(t) =2^{-n-2}a (2^{n}+t)^{\alpha},
\]
i.e.
\[
\frac{dP_{\tilde\theta_{n}(t)}}{dP}1\{X_{i} \in A\}=\exp(\tilde\theta_{n}(t)
X_{i}-\psi(\tilde\theta_{n}(t)))1\{X_{i}\in A\}.
\]
We also define $\tilde S_{k}:= \tilde X_{1}+\dots+\tilde X_{k}$ for $k\geq1$, and $\tilde T=\inf\left\{  n \geq0: s + \tilde S_{n} > a(n+t)^{\alpha
}+b(n+t)^{1-\alpha}\right\}\wedge 2^{n+1}$. In what follows, we shall suppress the
dependence on $t$ when there is no confusion. Let
\begin{equation}
\label{eq:ber}\tilde J=1-I\left\{  U\leq\frac{\exp(-\tilde\theta_{n} \tilde{S}_{\tilde
T} + \tilde T\psi(\tilde\theta_{n}))}{p_{t}(n)}I\left\{  \tilde T\in\left[
2^{n}, 2^{n+1}\right)  \right\}  \right\},
\end{equation}
where $U\sim$Uniform$[0,1]$.

\begin{lemma}
\label{lm:Ber2} For $\tilde J$ defined in \eqref{eq:ber}, when $s<\frac
{a}{4} t^{\alpha}$, we have
\[
P\left(  \tilde J=1\right)  =P\left(  \tilde T_{t,s}=\infty\right)  .
\]

\end{lemma}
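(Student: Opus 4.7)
My plan is to follow exactly the same template as the proof of Theorem \ref{th:step1}, adapted to the shifted setting indexed by $(t,s)$. The argument splits into two parts: (a) verify that the quantity inside the indicator in \eqref{eq:ber} is bounded by $1$, so the acceptance step is well-posed; and (b) apply the usual likelihood-ratio computation to identify $P(\tilde J=0)$ with $P(\tilde T_{t,s}<\infty)$, from which the lemma follows.

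For part (a), I would restrict to the event $\{\tilde T\in[2^n,2^{n+1})\}$, on which $\tilde T$ coincides with the unconstrained first-passage time and therefore $s+\tilde S_{\tilde T}>a(\tilde T+t)^\alpha+b(\tilde T+t)^{1-\alpha}$; using $\tilde T\geq 2^n$ this gives
\[
\tilde S_{\tilde T}\;\geq\;a(2^n+t)^\alpha+b(2^n+t)^{1-\alpha}-s.
\]
Combined with $\tilde T\psi(\tilde\theta_n)\leq 2^{n+1}\tilde\theta_n^2$ (valid as long as $\tilde\theta_n\in(-\delta',\delta')$) and the identity $\tilde\theta_n=2^{-n-2}a(2^n+t)^\alpha$, a short computation collapses the log of the numerator minus the log of $p_t(n)$ to
\[
-2^{-n-4}a^2(2^n+t)^{2\alpha}\;-\;2^{-n-2}ab(2^n+t)\;+\;\tilde\theta_n s\;+\;\log\!\tfrac{C_t}{2^n},
\]
where $C_t$ is the normalizing constant of $p_t$. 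The new feature relative to Theorem \ref{th:step1} is the term $\tilde\theta_n s$; this is precisely where the hypothesis $s<(a/4)t^\alpha$ is used, since it gives $\tilde\theta_n s<2^{-n-4}a^2(2^n+t)^\alpha t^\alpha\leq 2^{-n-4}a^2(2^n+t)^{2\alpha}$, which cancels the leading quadratic term. The residual is at most $-ab/4+\log(C_t/2^n)$ because $2^n+t\geq 2^n$, and the trivial bound $(2^m+t)^{2\alpha}\geq 2^{2m\alpha}$ in the definition of $C_t$ reduces $C_t$ to the sum appearing in \eqref{eq:par}; invoking the choice of $b$ forces the ratio below $1/(4\cdot 2^n)\leq 1/4$, as in Lemma \ref{lm:kappa}.

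For part (b), the argument is structurally identical to the one in the proof of Theorem \ref{th:step1}. Conditioning on $\tilde N(t)=n$ and using the fact that the indicator in \eqref{eq:ber} has conditional expectation equal to the ratio itself (since the ratio lies in $[0,1]$ by (a)), the change-of-measure identity under $P_{\tilde\theta_n}$ yields
\[
E\bigl[I\{\tilde J=0\}\mid\tilde N(t)=n\bigr]\;=\;\frac{P\!\left(\tilde T_{t,s}\in[2^n,2^{n+1})\right)}{p_t(n)}.
\]
Multiplying by $p_t(n)$ and summing over $n\geq 0$ telescopes to $P(\tilde J=0)=P(\tilde T_{t,s}<\infty)$, hence $P(\tilde J=1)=P(\tilde T_{t,s}=\infty)$.

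The main obstacle is the algebra in part (a): pinning down that the condition $s<(a/4)t^\alpha$ is exactly what is needed so that $\tilde\theta_n s$ can be absorbed into the exponential factor of $p_t(n)$, restoring the same $-ab/4$ margin available in the unshifted setting and allowing the same choice of $b$ in \eqref{eq:par} to close the bound uniformly in $(t,s)$. A secondary point worth checking is that $\tilde\theta_n(t)\in(-\delta',\delta')$ over the values of $n$ actually charged by $p_t$, so that the bound $\psi(\tilde\theta_n)\leq\tilde\theta_n^2$ remains in force; this is automatic provided $a<4\delta'$ and, if needed, a mild constraint on $t$ relative to $n$ coming from the concentration of $p_t$.
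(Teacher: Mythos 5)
Your proposal is correct and follows essentially the same approach as the paper: you bound the acceptance ratio by using $\tilde S_{\tilde T}\geq a(2^n+t)^\alpha+b(2^n+t)^{1-\alpha}-s$ on the indicator event, absorb $\tilde\theta_n s$ via the hypothesis $s<(a/4)t^\alpha$ together with $t^\alpha\leq(2^n+t)^\alpha$ to cancel the extra quadratic term, reduce the normalizer $C_t$ to the unshifted sum via $(2^m+t)^{2\alpha}\geq 2^{2m\alpha}$, and then close with the choice of $b$ from \eqref{eq:par}; part (b) is the standard change-of-measure telescoping, exactly as in Theorem \ref{th:step1}. Your secondary remark that one should verify $\tilde\theta_n(t)<\delta'$ is a point the paper glosses over, but it does not change the overall structure of the argument.
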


\begin{proof}
We first notice that
\begin{align*}
&  \frac{\exp(-\tilde\theta_{n} S_{\tilde T} + \tilde T\psi(\tilde\theta
_{n}))}{p_{t}(n)}I\left\{  \tilde T\in\left[  2^{n}, 2^{n+1}\right)  \right\}
\\
&  \leq\frac{1}{p_{t}(n)}\exp\left(  - \tilde\theta_{n} \left(  a(2^{n}%
+t)^{\alpha}+b(2^{n}+t)^{1-\alpha}-\frac{a}{4}t^{\alpha}\right)
+2^{n+1}\tilde\theta_{n}^{2} \right) \\
&  \leq\frac{1}{p_{t}(n)}\exp\left(  -2^{-n-3}a^{2} (2^{n}+t)^{2\alpha
}+2^{-n-4}a^{2}(2^{n}+t)^{2\alpha}-\frac{ab}{4}\right) \\
&  = \frac{1}{p_{t}(n)}\exp\left(  -2^{-n-4}a^{2} (2^{n}+t)^{2\alpha}%
-\frac{ab}{4}\right) \\
&  \leq\left(  \sum_{m=0}^{\infty}2^{m} \exp\left(  -2^{-m-4}a^{2}
(2^{m}+t)^{2\alpha} \right)  \right)  \times\exp(-ab/4)\\
&  \leq\left(  \sum_{m=0}^{\infty}2^{m} \exp\left(  -a^{2} 2^{2m\alpha-m-4}
\right)  \right)  \times\exp(-ab/4)  
\leq\frac{1}{4},
\end{align*}
where the last inequality follows from our choice of $a$ and $b$. The rest of
the proof follows exact the same steps as the proof of Theorem \ref{th:step1}.
We shall omit it here.
\end{proof}

Let
\[
L(n)=\inf\left\{  k \geq n: S_{k} > ak^{\alpha}+bk^{1-\alpha} \mbox{ or }
S_{k}<\frac{a}{4}k^{\alpha}\right\}  .
\]
The sampling algorithm goes as follows.\newline

\noindent\textbf{Procedure B.} Sampling $(X_{1}, \dots, X_{n})$ conditional on
$T_{1}=\infty$.

\begin{enumerate}
\item[i)] Sample $(X_{1},\dots, X_{n})$ under the nominal distribution
$P(\cdot)$.

\item[ii)] If $\max_{1\leq k\leq n}\{S_{k}-ak^{\alpha}-bk^{1-\alpha}\}>0$, go
back to step i); else, go to step iii).

\item[iii)] Sample $L(n)$ and $(X_{n+1}, X_{n+2}, \dots, X_{L(n)})$ under the
nominal distribution $P(\cdot)$. If $S_{L(n)}> aL(n)^{\alpha}+bL(n)^{1-\alpha
}$, go back to step i); else, go to step iv).

\item[iv)] Sample $\tilde N$ with probability mass function $p_{L(n)}$ defined
in \eqref{eq2}. Generate $(\tilde X_{1}, \tilde X_{2}, \dots, \tilde
X_{2^{\tilde N+1}-1})$ under exponential tilting with tilting parameter
$\tilde\theta_{\tilde N}=2^{\tilde N-2} a\left(  2^{\tilde N}+L(n)\right)
^{\alpha}$. Let $\tilde T=\inf\{k\geq1: S_{L(n)}+\tilde S_{k}%
>a(k+L(n))^{\alpha}+b(k+L(n))^{1-\alpha}\} \wedge2^{\tilde N+1}$.

\item[v)] Sample $U\sim$Uniform$[0,1]$. If
\[
U\leq\frac{\exp\left(  -\tilde\theta_{\tilde N} S_{\tilde T} + \tilde
T\psi(\tilde\theta_{\tilde N})\right)  }{p_{t}(\tilde N)}I\left\{  \tilde
T\in\left[  2^{\tilde N}, 2^{\tilde N+1}\right)  \right\}  ,
\]
set $\tilde J=0$ and go back to Step i); else, set $\tilde J=1$ and output
$(X_{1}, \dots, X_{n})$.\newline
\end{enumerate}

We next show that Procedure B works.

\begin{theorem}
\label{th:step2} The output of Procedure B follows the distribution of
$(X_{1}, \dots, X_{n})$ conditional on $T_{1}=\infty$.
\end{theorem}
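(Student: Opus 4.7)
The plan is to view Procedure B as an acceptance-rejection sampler whose proposal is the nominal distribution of $(X_1,\ldots,X_n)$ and whose target is the conditional law $P(\cdot \mid T_1 = \infty)$. In one outer iteration acceptance happens exactly when step ii) succeeds (so $S_k \leq ak^{\alpha}+bk^{1-\alpha}$ for all $1 \leq k \leq n$), step iii) succeeds (so $S_{L(n)} < \frac{a}{4}L(n)^{\alpha}$), and the Bernoulli flag $\tilde J$ of step v) equals $1$. The goal is to show that, conditional on $(X_1,\ldots,X_n)$, this overall acceptance probability is exactly $P(T_1 = \infty \mid X_1,\ldots,X_n)$. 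Once this is established, the acceptance-rejection calculation carried out at the end of the proof of Theorem \ref{th:step1} transfers verbatim and yields the desired conditional distribution.

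The central step is to apply the strong Markov property at the stopping time $L(n)$. On the event $\{S_{L(n)} < \frac{a}{4}L(n)^{\alpha}\}$, i.e., whenever step iv) is reached, Lemma \ref{lm:Ber2} applies with $t=L(n)$ and $s = S_{L(n)}$ and gives
\[
P(\tilde J = 1 \mid \mathcal{F}_{L(n)}) = P(\tilde T_{L(n),S_{L(n)}} = \infty).
\]
By the strong Markov property, the right-hand side equals the conditional probability, given $\mathcal{F}_{L(n)}$, that the walk never crosses the upper boundary after $L(n)$. Taking expectations over the nominal continuation $(X_{n+1},\ldots,X_{L(n)})$ then gives, on the event that step ii) holds,
\[
P(\mathrm{accept} \mid X_1,\ldots,X_n) = P(T_1 = \infty \mid X_1,\ldots,X_n),
\]
because $\{T_1 = \infty\}$ decomposes as $\{\text{step iii) succeeds}\} \cap \{\text{no further crossing after } L(n)\}$ on that event. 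From here the familiar tilted-path calculation of the proof of Theorem \ref{th:step1} produces $P^{*}(X_1 \in A_1,\ldots,X_n \in A_n \mid \tilde J = 1) = P(X_1 \in A_1,\ldots,X_n \in A_n \mid T_1 = \infty)$.

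The main obstacle I expect is the Markov-at-a-random-time argument. It requires $L(n) < \infty$ almost surely so that steps iv)-v) are well-defined, which follows because $\alpha > 1/2$ together with the law of the iterated logarithm gives $S_k/k^{\alpha} \to 0$ almost surely, forcing $S_k < \frac{a}{4} k^{\alpha}$ eventually. With $L(n)$ almost surely finite, the strong Markov property for i.i.d.\ increments is standard and supplies the factorization used above. A secondary matter is that the outer loop terminates almost surely; since Lemma \ref{lm:kappa} gives $P(T_1 = \infty) \geq 3/4$, the number of outer iterations is stochastically dominated by a geometric random variable, so the output is well-defined with probability one.
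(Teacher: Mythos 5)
Your proposal is correct and follows essentially the same route as the paper's proof: both treat Procedure B as acceptance-rejection against the nominal law, both invoke Lemma~\ref{lm:Ber2} to identify $P(\tilde J=1\mid \mathcal{F}_{L(n)})$ with $P(\tilde T_{L(n),S_{L(n)}}=\infty)$, and both use the Markov property at the random time $L(n)$ together with the tower property to conclude that the per-path acceptance probability equals $P(T_1=\infty\mid X_1,\dots,X_n)$, which is (up to the constant $P(T_1=\infty)$) the required Radon--Nikodym derivative. The paper phrases this as a chain of conditional equalities after first bounding $dP'/dP$; you phrase it as computing the acceptance probability directly. One minor imprecision: the concluding step is not really the ``tilted-path calculation'' of Theorem~\ref{th:step1} (all the tilting lives inside Lemma~\ref{lm:Ber2}); what you actually need at the end is just the generic acceptance-rejection identity, namely that if the acceptance probability given proposal $(x_1,\dots,x_n)$ is $P(T_1=\infty\mid X_1=x_1,\dots,X_n=x_n)$, then accepted samples are distributed as $P(\cdot\mid T_1=\infty)$.
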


\begin{proof}
Let $P^{\prime}(\cdot)=P(\cdot|T_{1}=\infty)$. We first notice that
\[
\frac{dP^{\prime}}{dP}(X_{1}, \dots, X_{n})=\frac{I\{T_{1}>n\}P(T_{1}%
=\infty|S_{n}, T_{1}>n)}{P(T_{1}=\infty)}\leq\frac{1}{P(T_{1}=\infty)}.
\]
Let $P^{\prime\prime}(\cdot)$ denote the measure induced by Procedure B. Then
we have, for any sequence of Borel measurable sets $A_{i}\subset\mathbb{R}$,
$i=1,2, \dots, n$,
\begin{align*}
&  P^{\prime\prime}\left(  X_{1}\in A_{1}, \dots, X_{n}\in A_{n}\right) \\
&  =P\left(  X_{1}\in A_{1}, \dots, X_{n}\in A_{n}| T_{1}>L(n), \tilde
J=1\right) \\
&  =P\left(  X_{1}\in A_{1}, \dots, X_{n}\in A_{n}| T_{1}>L(n), \tilde
T_{L(n), S_{L(n)}}=\infty\right) \\
&  =P(X_{1}\in A_{1}, \dots, X_{n}\in A_{n}|T_{1}=\infty),
\end{align*}
where the second equality follows from Lemma \ref{lm:Ber2}, and the third
equality follows from the fact that
\[
P(T_{1}=\infty|S_{t}, T_{1}>t)=P\left(  \tilde T_{t, S_{t}}=\infty\right)  .
\]

\end{proof}

To execute Step iv) in Algorithm 1, we apply Procedure B with $n=\Gamma
(\kappa)$.

\section{Running time analysis\label{Sec_Running_Time}}

In this section, we provide a detailed running time analysis of Algorithm 1.

\begin{theorem}
Algorithm 1 has finite expected running time.
\end{theorem}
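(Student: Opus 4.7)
The plan is to decompose the total running time $R$ of Algorithm 1 as $R = R_A + R_B$, where $R_A$ is the cumulative cost of the $\kappa$ calls to Procedure A used to find $\kappa$, and $R_B$ is the cost of the single Procedure B invocation with input $\Gamma(\kappa)$. I will show that both $E[R_A]$ and $E[R_B]$ are finite. The first ingredient, for $R_A$, is that each call to Procedure A samples $N$ from the pmf \eqref{eq1} and then generates $2^{N+1}-1$ increments, so its cost is $O(2^N)$. A direct computation gives
\[
E[2^N] = \frac{\sum_{n\geq 0} 2^{2n}\exp(-a^2 2^{2n\alpha-n-3})}{\sum_{n\geq 0} 2^{n}\exp(-a^2 2^{2n\alpha-n-3})},
\]
which is finite precisely because $\alpha>1/2$ makes the exponent $2n\alpha-n-3$ tend to $+\infty$, so the series converges super-geometrically. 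Since $\kappa$ is geometric with $E[\kappa]\leq 4/3$ by Lemma \ref{lm:kappa}, and the $N$'s of the successive calls are independent of $\kappa$, Wald's identity gives $E[R_A]<\infty$.

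For $R_B$, Procedure B loops until acceptance, with each trial succeeding independently with probability $P(T_1=\infty)\geq 3/4$, so the expected number of trials is at most $4/3$. It thus suffices to bound the expected cost of a single trial when the input is $n=\Gamma(\kappa)$. One trial costs at most $n$ (step i) plus $L(n)-n$ (step iii) plus $2^{\tilde N+1}$ (step iv). For $L(n)-n$: because $S_k/k^{\alpha}\to 0$ a.s.\ when $\alpha>1/2$, a standard first-passage/Chernoff estimate based on Assumption \ref{A1} shows that $P(L(n)-n>m)$ decays fast enough that $E[L(n)]$ is bounded by a polynomial in $n$. For $E[2^{\tilde N(L(n))}]$: an inspection of the pmf $p_{t}$ in \eqref{eq2} (splitting according to $2^n\lessgtr t$) shows that it is bounded by a polynomial in $t$. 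Combining these gives a polynomial $Q$ with $E[\text{cost of one trial}\mid \text{input }n]\leq Q(n)$, hence $E[R_B]\leq \tfrac{4}{3}E[Q(\Gamma(\kappa))]$.

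The final ingredient is therefore showing that $\Gamma(\kappa)$, and in fact $S_{T_{\kappa-1}}$, has all polynomial moments finite, since $\Gamma(\kappa)\leq(2S_{T_{\kappa-1}}+2\xi+1)^{1/\alpha}$. By the strong Markov property we may write $S_{T_{\kappa-1}}=\sum_{j=1}^{\kappa-1} V_j$ where the $V_j$'s are i.i.d.\ copies of $S_{T_1}$ conditional on $T_1<\infty$ and are independent of $\kappa$. A Chernoff bound on the event $\{S_n>x,\,T_1=n\}$, using exactly the tilt $\theta_n=a 2^{(\alpha-1)n-2}$ from the proof of Lemma \ref{lm:kappa} together with Assumption \ref{A1} for the regime in which $x$ dominates the boundary, yields $E[V_1^q]<\infty$ for every $q\geq 1$. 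Since $\kappa$ is geometric and independent of the $V_j$'s, standard moment bounds for geometric sums of i.i.d.\ heavy enough random variables give $E[S_{T_{\kappa-1}}^q]<\infty$ for all $q$, and hence $E[\Gamma(\kappa)^q]<\infty$ for all $q$, which completes the proof. The main obstacle is the last step: bounding polynomial moments of the overshoot $V_1=S_{T_1}$ at the first-passage time over the non-linear boundary requires carefully splitting the tail event by the value of $T_1$ and choosing a separate tilt for the piece where $x$ dominates versus the piece where the boundary dominates, rather than reusing the single-tilt argument of Lemma \ref{lm:kappa} verbatim.
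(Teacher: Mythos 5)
Your decomposition into the cumulative Procedure-A cost (Wald's identity applied to $\kappa$, which is geometric, and the per-call cost $2^{N+1}$ with all moments finite) plus the Procedure-B cost (rejection loop with acceptance probability $\ge 3/4$, each trial bounded by $L(\Gamma(\kappa))$ and $2^{\tilde N+1}$), together with the geometric-compound-sum control of $S_{T_{\kappa-1}}$ and hence $\Gamma(\kappa)$, matches the paper's proof strategy essentially step for step. The one technical point your sketch flags but does not resolve is handled more simply in the paper: rather than splitting tilts by regime, one observes that the overshoot $R_{T_1}=S_{T_1}-aT_1^{\alpha}-bT_1^{1-\alpha}$ satisfies $R_{T_1}\le X_{T_1}$ (because $S_{T_1-1}$ is still below the boundary), and then H\"older's inequality separates the exponential moment of a single increment from the fast-decaying tail $P(T_1=n)$ already established via $T_1\le 2^{N+1}$, yielding $E[e^{\gamma R_{T_1}}I(T_1<\infty)]<\infty$ and hence all polynomial moments of $V_1=S_{T_1}$ given $T_1<\infty$.
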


We divide the analysis into the following steps.

\begin{enumerate}
\item From Lemma \ref{lm:kappa}, the number of iterations between step ii) and
iii) follows a geometric distribution with probability of success
$P(T_{1}=\infty)$.

\item In each iteration (when applying Procedure A), we will show that the
length of the path needed to sample $J$ has finite moments of all orders
(Lemma \ref{lm:run_A}).

\item For step iv), we will show that $\Gamma(\kappa)$ has finite moments of
all orders (Lemma \ref{lm:run4}).

\item When applying Procedure B for step iv), we will first show that
$L(\Gamma(\kappa))$ has finite moments of all orders (Lemma \ref{lm:run4}).

\item When applying Procedure B for step iv), we will also show that the
length of the path needed to sample $J$ has finite moments of all orders
(Lemma \ref{lm:run_B}).
\end{enumerate}

\begin{lemma}
\label{lm:run_A} The length of the path needed to sample the Bernoulli $J$ in
Procedure A has finite moments of every order.
\end{lemma}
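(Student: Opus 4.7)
The plan is to reduce the claim to a moment bound for $2^{N}$, where $N$ is the auxiliary random variable sampled in step i) of Procedure A. Observe that once $N$ has been drawn, the procedure consumes at most $2^{N+1}-1$ increments $X_i$ (this is the truncation of $T_1$ imposed in step ii)), plus a bounded number of auxiliary uniforms used to sample $N$ itself and to perform the acceptance test in step iii). Consequently, if $\mathcal{L}$ denotes the length of the path generated by one execution of Procedure A, then $\mathcal{L}\leq 2^{N+1}$, and it suffices to verify that $E[2^{rN}]<\infty$ for every $r\geq 0$.

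For this, I would simply substitute the explicit pmf \eqref{eq1} into the definition of the moment:
\begin{align*}
E[2^{rN}] \;=\; \frac{1}{C}\sum_{n=0}^{\infty} 2^{(r+1)n}\exp\!\left(-a^{2}\,2^{\,n(2\alpha-1)-3}\right),
\end{align*}
where $C=\sum_{m=0}^{\infty}2^{m}\exp(-a^{2}2^{m(2\alpha-1)-3})$ is the normalizing constant. Because $\alpha\in(1/2,1)$ gives $2\alpha-1>0$, the quantity $n(2\alpha-1)-3$ grows linearly in $n$, so $\exp(-a^{2}\,2^{\,n(2\alpha-1)-3})$ decays doubly-exponentially in $n$. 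This decay dominates the merely singly-exponential factor $2^{(r+1)n}$ for any fixed $r$, yielding convergence of the series; the same calculation applied with $r=0$ shows $C<\infty$ as well.

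There is no real obstacle here; the verification is a direct computation. The conceptual point is that the pmf $p(n)$ was designed to match the Gaussian-type tail bound used in the proof of Lemma \ref{lm:kappa}, which automatically produces doubly-exponential decay in $n$, and therefore every power moment of $2^{N}$ (and hence of $\mathcal{L}$) is finite.
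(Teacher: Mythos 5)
Your proof is correct and follows essentially the same route as the paper: bound the path length by $2^{N+1}$, then substitute the pmf \eqref{eq1} and observe that the doubly-exponential decay of $\exp(-a^{2}2^{m(2\alpha-1)-3})$ (valid since $2\alpha-1>0$) dominates any exponential factor $2^{(r+1)m}$. The only thing the paper adds, as an aside after the same computation, is the observation $E[T_1^r I(T_1<\infty)]\le E[2^{(N+1)r}]<\infty$, which it records for later use in Lemma \ref{lm:run4} but which is not needed for the lemma statement itself.
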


\begin{proof}
The length of the path generated in Procedure A is bounded by $2^{N+1}$, where
$N$'s distribution is defined in \eqref{eq1}. Therefore, $\forall r > 0$,
\begin{align*}
E[2^{(N+1)r}]  &  =\frac{\sum_{m=0}^{\infty}2^{(m+1)r}2^{m} \exp\left(  -a^{2}
2^{2m\alpha-m -3} \right)  }{\sum_{m=0}^{\infty}2^{m} \exp\left(  -a^{2}
2^{2m\alpha-m -3} \right)  }\\
&  = \frac{\sum_{m=0}^{\infty}\exp\left(  -a^{2} 2^{2m\alpha-m -3} +
(mr+r+m)\log2 \right)  }{\sum_{m=0}^{\infty}2^{m} \exp\left(  -a^{2}
2^{2m\alpha-m -3} \right)  }.\\
\end{align*}
Since for $m$ sufficiently large,
\[
\exp\left(  -a^{2} 2^{2m\alpha-m -3} +(mr+r+m)\log2 \right)  \le\exp\left(
-a^{2} 2^{2m\alpha-m -4} \right)  ,
\]
for fixed $r>0$, $\exists C>0$, such that
\begin{align*}
&  \frac{\sum_{m=0}^{\infty}\exp\left(  -a^{2} 2^{2m\alpha-m -3} +
(mr+r+m)\log2 \right)  }{\sum_{m=0}^{\infty}2^{m} \exp\left(  -a^{2}
2^{2m\alpha-m -3} \right)  }\\
\le &  C\frac{\sum_{m=0}^{\infty}\exp\left(  -a^{2} 2^{2m\alpha-m -4} \right)
}{\sum_{m=0}^{\infty}2^{m} \exp\left(  -a^{2} 2^{2m\alpha-m -3} \right)  } <
\infty.
\end{align*}
Note that this also implies that
\[
E[T^{r} I(T < \infty)] \le E[2^{(N+1)r}I (J = 0)] \le E[2^{(N+1)r}] < \infty.
\]

\end{proof}

\begin{lemma}
\label{lm:run4} $\Gamma(\kappa)$ and $L(\Gamma(\kappa))$ have finite moments
of any order.

\end{lemma}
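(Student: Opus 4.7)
The plan is to handle $\Gamma(\kappa)$ and $L(\Gamma(\kappa))$ separately, using Lemma \ref{lm:run_A} together with the doubly-exponential tail estimate already isolated in the proof of Lemma \ref{lm:kappa} as the main quantitative inputs.

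\textbf{Part 1: $\Gamma(\kappa)$.} The first step is to reduce to moments of $S_{T_{\kappa-1}}$. Since $\xi$ in \eqref{eq:xi} is a deterministic constant and $\Gamma(\kappa) \le (2S_{T_{\kappa-1}}+2\xi)^{1/\alpha} + 1$, it suffices to show $E[S_{T_{\kappa-1}}^p] < \infty$ for every $p\ge 1$. I decompose
\[
S_{T_{\kappa-1}}=\sum_{j=1}^{\kappa-1}(S_{T_j}-S_{T_{j-1}}),
\]
and apply the strong Markov property: conditional on $\kappa\ge j$ the summands are iid with common law $S_{T_1}$ under $P(\cdot\mid T_1<\infty)$. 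Since $\kappa-1$ is geometric (Lemma \ref{lm:kappa}) and therefore has moments of all orders, Minkowski's inequality reduces the task to showing $E[|S_{T_1}|^p\mid T_1<\infty]<\infty$ for every $p$. For this I use the pathwise bound
\[
|S_{T_1}|\le aT_1^{\alpha}+bT_1^{1-\alpha}+|X_{T_1}|,
\]
valid on $\{T_1<\infty\}$ since $S_{T_1-1}$ lies below the record-breaking boundary at time $T_1-1$. The first two terms have all moments by Lemma \ref{lm:run_A}. For the overshoot $|X_{T_1}|$, Cauchy--Schwarz gives
\[
E[|X_{T_1}|^{p}I(T_1<\infty)]\le \sum_{t\ge 1}\sqrt{E[|X_1|^{2p}]\,P(T_1=t)},
\]
and grouping $t\in[2^n,2^{n+1})$, applying Cauchy--Schwarz once more inside each block together with the bound $P(T_1\in[2^n,2^{n+1}))\le 2^n\exp(-a^2 2^{2n\alpha-n-3})$ extracted from the proof of Lemma \ref{lm:kappa}, yields a convergent series $\sum_n 2^n\exp(-a^2 2^{2n\alpha-n-4})$ (the convergence uses $\alpha>1/2$). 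Assumption \ref{A1} supplies the finiteness of $E[|X_1|^{2p}]$.

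\textbf{Part 2: $L(\Gamma(\kappa))$.} I plan to establish the conditional estimate $E[L(n)^p\mid T_1>n]\le Cn^p$ uniformly in $n$ and then integrate using the moments of $\Gamma(\kappa)$ from Part 1. The key observation is that $L(n)=n$ whenever $S_n<(a/4)n^{\alpha}$; the event $\{S_n\ge (a/4)n^{\alpha}\}$ corresponds to a super-diffusive deviation since $\alpha>1/2$, so a standard Chernoff/Cram\'er bound based on Assumption \ref{A1} yields $P(S_k\ge (a/4)k^{\alpha})\le \exp(-ck^{2\alpha-1})$ for some $c>0$ and all large $k$. Combined with $P(T_1>n)\ge 3/4$ from Lemma \ref{lm:kappa}, this gives
\[
P(L(n)>n+m\mid T_1>n)\le \tfrac{4}{3}\exp\bigl(-c(n+m)^{2\alpha-1}\bigr),
\]
whose tails are integrable against any polynomial in $m$ and produce $E[L(n)^p\mid T_1>n]\le C'n^p$. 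Iterated expectation then delivers $E[L(\Gamma(\kappa))^p]\le C'\,E[\Gamma(\kappa)^p]<\infty$.

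\textbf{Main obstacle.} The least routine step is controlling $E[|X_{T_1}|^pI(T_1<\infty)]$, because the overshoot $X_{T_1}$ is statistically coupled to $T_1$: it is the jump that triggers the boundary crossing, so one cannot invoke independence of $X_{T_1}$ and $\{T_1=t\}$. The Cauchy--Schwarz decoupling against the doubly-exponentially small block probability $P(T_1\in[2^n,2^{n+1}))$ is the essential quantitative idea that makes this bound finite; the remainder of the argument is bookkeeping with the strong Markov property and a textbook large-deviations estimate.
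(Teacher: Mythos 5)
Your proof is correct and takes essentially the same route as the paper's: you decompose $S_{T_{\kappa-1}}$ into $\kappa-1$ iid record increments, control the overshoot via $S_{T_1}\le aT_1^{\alpha}+bT_1^{1-\alpha}+|X_{T_1}|$ and a Cauchy--Schwarz decoupling against the super-exponentially small block probabilities of $T_1$ (the paper uses H\"older and a power-mean/Jensen inequality where you use Minkowski, and it proves the slightly stronger statement that the overshoot $R_{T_1}$ has an exponential moment, but the mechanism is identical), and bound $L(\Gamma(\kappa))-\Gamma(\kappa)$ via the same Chernoff estimate $P\left(S_k\ge (a/4)k^{\alpha}\right)\le\exp(-ck^{2\alpha-1})$. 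Your handling of Part 2, conditioning on $\{T_1>n\}$ and dividing by $P(T_1>n)\ge 3/4$, is a modest streamlining of the paper's conditioning on $S_{\Gamma(\kappa)}=s_*<an_*^{\alpha}+bn_*^{1-\alpha}$, but the content is the same.
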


\begin{proof}
We start with $\Gamma(\kappa)$. Let $R_{n}:=S_{n}-an^{\alpha}-bn^{1-\alpha}$.
For $T_{i}<\infty$, we also denote
\[
\mathcal{R}_{i}:=S_{T_{i}}-S_{T_{i-1}}-a(T_{i}-T_{i-1})^{\alpha}-b
(T_{i}-T_{i-1})^{1-\alpha}.
\]
\begin{align*}
\Gamma(\kappa)  &  =\left\lceil (2S_{T_{\kappa-1}} + 2\xi)^{1/\alpha}
\right\rceil \\
&  \leq\left\lceil \left(  \sum_{i=1}^{\kappa-1}(T_{i}-T_{i-1})^{\alpha}
+2\kappa\xi+ 2\sum_{i=1}^{\kappa-1}\mathcal{R}_{i}\right)  ^{1/\alpha
}\right\rceil.
\end{align*}
We first prove that conditioning on $T_{1}<\infty$, $R_{T_{1}}$ has finite
moments of every order.
\begin{align*}
&  E[e^{\gamma R_{T_{1}}} I (T_{1} < \infty)]\\
=  &  \sum_{n = 0}^{\infty}E[e^{\gamma R_{n}}I(T_{1}=n)]\\
=  &  \sum_{n = 0}^{\infty}E[e^{\gamma(X_{n}+S_{n-1}-an^{\alpha}-bn^{1-\alpha
})}I(T_{1}=n)]\\
\le &  \sum_{n = 0}^{\infty}E[e^{\gamma X_{n}}I(T_{1}=n)]\\
\le &  \sum_{n = 0}^{\infty}E[e^{p \gamma X_{n}}]^{1/p} E[I(T_{1}=n)]^{1/q}
~~\mbox{ for } p, q >1, \frac{1}{p}+\frac{1}{q}=1
\mbox{ by H\"older's inequality}\\
\le &  E[e^{p \gamma X_{n}}]^{1/p} \sum_{n = 0}^{\infty}P(T_{1}=n)^{1/q} .
\end{align*}
Because $X_{n}$ has moment generating function within a neighborhood of $0$,
we can choose $p>0$ and $\gamma>0$ such that $E[e^{p \gamma X_{n}}%
]^{1/p}<\infty$. In the proof of Lemma \ref{lm:run_A} we showed that $\forall
r >0$, $E[T_{1}^{r} I(T_{1}<\infty)]<\infty$, which implies that
$P(T_{1}=n)=O(\frac{1}{n^{r}})$. Because $r$ can be any positive value,
$\sum_{n = 0}^{\infty}P(T_{1}=n)^{1/q} < \infty$.
By Jensen's inequality, for any $r \ge1$,
\begin{align}
\label{eq:bound} &  E\left[  \left(  \sum_{i=1}^{\kappa-1}(T_{i}%
-T_{i-1})^{\alpha}+2\kappa\xi+2\sum_{i=1}^{\kappa-1}\mathcal{R} _{i}\right)
^{r/\alpha} \right] \nonumber\\
&  \le3^{\frac{r}{\alpha}-1}E\left[  \left(  \sum_{i=1}^{\kappa-1}%
(T_{i}-T_{i-1})^{\alpha}\right)  ^{r/\alpha} + \left(  2 \kappa\xi\right)
^{r/\alpha}+ \left(  2\sum_{i=1}^{\kappa-1}\mathcal{R} _{i} \right)
^{r/\alpha} \right].
\end{align}
We next analyze each of the three part on the right hand side of
\eqref{eq:bound}. As $\kappa$ is a geometric random variable, $E[\left(  2
\kappa\xi\right)  ^{r/\alpha}] < \infty$.
\begin{align*}
E\left[  \left(  \sum_{i=1}^{\kappa-1}(T_{i}-T_{i-1})^{\alpha}\right)
^{r/\alpha} \right]   &  = E \left[  E\left[  \left(  \sum_{i=1}^{\kappa
-1}(T_{i}-T_{i-1}) ^{\alpha}\right)  ^{r/\alpha}\lvert\kappa\right]  \right]
\\
&  \le E \left[  \kappa^{r/\alpha-1} E\left[  \sum_{i=1}^{\kappa-1}%
(T_{i}-T_{i-1})^{r}\lvert\kappa\right]  \right] \\
&  \le E \left[  \kappa^{r/\alpha} 2^{(N+1)r+1} \right] \\
&  =E \left[  \kappa^{r/\alpha}\right]  E\left[  2^{(N+1)r+1} \right]  <
\infty.
\end{align*}
Similarly, we have
\[
E\left[  \left(  2\sum_{i=1}^{\kappa-1}\mathcal{R} _{i} \right)  ^{r/\alpha}
\right]  \le E \left[  (2\kappa)^{r/\alpha} \right]  E\left[  R_{T_{1}%
}^{r/\alpha}|T_{1}<\infty\right]  \newline< \infty.
\]
Therefore, we have
\[
E[\Gamma(\kappa)]<\infty.
\]
We next analyze $L(\Gamma(\kappa))$.
\[
L(\Gamma(\kappa))-\Gamma(\kappa) \le\inf\left\{  n \ge0: S_{n+\Gamma(\kappa)}
<\frac{a}{4} (n+\Gamma(\kappa))^{\alpha}\right\}  .
\]
Given $\Gamma(\kappa)=n_{*}$ and $S_{\Gamma(\kappa)}=s_{*}$, since $s_{*}<a
n_{*}^{\alpha}+b n_{*}^{1-\alpha}$,
\begin{align*}
&  P(L(\Gamma(\kappa))-\Gamma(\kappa) > n|\Gamma(\kappa)=n_{*}, S_{\Gamma
(\kappa)}=s_{*})\\
\le &  P\left(  {S}_{n} \ge\frac{a}{4}(n+n_{\ast})^{\alpha}-s_{*}\right) \\
\le &  P\left(  {S}_{n} \ge\frac{a}{4}(n+n_{\ast})^{\alpha}-a n_{\ast}%
^{\alpha}-b n_{\ast}^{1-\alpha}\right) \\
\le &  P\left(  {S}_{n} \ge\frac{a}{4}(n+n_{\ast})^{\alpha}-\frac{1}{2}
n_{\ast}^{\alpha}-\xi\right) \\
\le &  \exp\left(  n \theta^{2}-\theta\left(  \frac{a}{4}(n+n_{\ast})^{\alpha
}-\frac{1}{2} n_{\ast}^{\alpha}-\xi\right)  \right)
\mbox{ for $0<\theta<\delta^{\prime}$}.
\end{align*}
Let $w_{n}=\frac{a}{4}(n+n_{\ast})^{\alpha}-\frac{1}{2} n_{\ast}^{\alpha}-\xi
$. If we pick $\theta=\epsilon_{n} |\frac{w_{n}}{n}|$ where $\epsilon_{n}$ is
chosen such that $\theta<\delta^{\prime}$. Then
\[
\exp\left(  n \theta^{2}-\theta w_{n}\right)  \leq\exp\left(  -\frac{w_{n}%
^{2}}{n}\epsilon_{n}(1-\epsilon_{n})\right)  \leq\exp\left(  -\frac{w_{n}^{2}%
}{4n}\right)  .
\]
We notice that for $n$ large enough,
\[
w_{n}\leq\frac{a}{5}(n+n_{*})^{\alpha}.
\]
Thus, $\exists C>0$, such that
\begin{align*}
P(L(\Gamma(\kappa))-\Gamma(\kappa) > n|\Gamma(\kappa)=n_{*}, S_{\Gamma
(\kappa)}=s_{*})  &  \leq C\exp\left(  -\frac{a^{2}}{100}\frac{(n+n_{*}%
)^{2\alpha}}{n}\right) \\
&  \leq C\exp\left(  -\frac{a^{2}}{100} n^{2\alpha-1}\right).
\end{align*}
This implies that, given $\Gamma(\kappa)$ and $S_{\Gamma(\kappa)}$,
$L(\Gamma(\kappa))-\Gamma(\kappa)$ has finite moments of all orders.

\end{proof}

\begin{lemma}
\label{lm:run_B} The length of the path needed to sample the Bernoulli $\tilde
J$ in Procedure B has finite moments of every order.

\end{lemma}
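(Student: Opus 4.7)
The length of the path used to sample the Bernoulli $\tilde J$ in Procedure B is the length $2^{\tilde N + 1} - 1$ of the tilted block generated in step iv), so by analogy with Lemma \ref{lm:run_A} it suffices to show $E[(2^{\tilde N + 1})^r] < \infty$ for every $r > 0$, where $\tilde N$ has the pmf $p_t$ in \eqref{eq2} with the random parameter $t = L(\Gamma(\kappa))$. Conditioning on $t$ and mimicking the ratio computation used in the proof of Lemma \ref{lm:run_A}, I would write
\[
E\bigl[(2^{\tilde N + 1})^r \,\big|\, t\bigr]
= \frac{\sum_{n\geq 0} 2^{(n+1)r + n}\exp\bigl(-2^{-n-4} a^2 (2^n + t)^{2\alpha}\bigr)}{\sum_{m\geq 0} 2^m \exp\bigl(-2^{-m-4} a^2 (2^m + t)^{2\alpha}\bigr)}.
\]
The numerator admits a uniform bound in $t$: since $(2^n + t)^{2\alpha} \geq 2^{2\alpha n}$ for every $t \geq 0$, it is dominated by the finite constant $C_r := \sum_n 2^{(n+1)r + n}\exp(-a^2 2^{2n\alpha - n - 4})$, exactly the series controlled in Lemma \ref{lm:run_A}. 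For the denominator, keeping only the single term at $m_\ast(t) = \lceil \log_2(t\vee 1)\rceil$ --- for which $2^{m_\ast}\in[t,2t]$ and $(2^{m_\ast} + t)^{2\alpha}\leq (3t)^{2\alpha}$ --- yields a lower bound of the form $t\exp(-c_1 t^{2\alpha-1})$ for an explicit $c_1$ proportional to $a^2$, so altogether $E[(2^{\tilde N + 1})^r \mid L(\Gamma(\kappa)) = t] \leq (C_r/t)\exp(c_1 t^{2\alpha-1})$.

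To finish, I need $E[\exp(c_1 L(\Gamma(\kappa))^{2\alpha - 1})] < \infty$. Using the sublinear subadditivity $(x + y)^{2\alpha - 1} \leq x^{2\alpha - 1} + y^{2\alpha - 1}$ (valid because $2\alpha - 1\in(0,1)$) and the decomposition $L(\Gamma(\kappa)) = \Gamma(\kappa) + (L(\Gamma(\kappa)) - \Gamma(\kappa))$, it suffices to control exponential moments of $\Gamma(\kappa)^{2\alpha-1}$ and of $(L(\Gamma(\kappa)) - \Gamma(\kappa))^{2\alpha-1}$ separately. The second follows directly from the Cram\'er-type tail $P(L(\Gamma(\kappa)) - \Gamma(\kappa) > n \mid \cdot) \leq C\exp(-c' n^{2\alpha - 1})$ already produced inside the proof of Lemma \ref{lm:run4}. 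For the first, I would rerun the bound $\Gamma(\kappa)^\alpha \lesssim \sum_{i=1}^{\kappa-1}(T_i-T_{i-1})^\alpha + \kappa\xi + \sum_{i=1}^{\kappa-1}\mathcal{R}_i$ from that proof while tracking exponential moments of $(T_i-T_{i-1})^{2\alpha-1}$ (from the super-exponential decay of $p(\cdot)$ in \eqref{eq1}) and of $\mathcal{R}_i^{2\alpha-1}$ (from Assumption \ref{A1}), summing over the geometric number of record-breakers $\kappa$ via a standard geometric-sum estimate after one more application of the sublinear-exponent subadditivity.

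The main obstacle is that the conditional bound carries a multiplier $\exp(c_1 t^{2\alpha-1})$, which is integrable against $t = L(\Gamma(\kappa))$ only because the exponent $2\alpha - 1$ exactly matches the exponent appearing in the sub-exponential tail of $L(\Gamma(\kappa))$. This match is not accidental --- both constants trace back to the same Gaussian-type large deviation estimate driven by the parameter $a$ --- but closing the argument requires the implicit constants to satisfy $c_1 < c'$, which a naive single-term denominator estimate may not deliver. Summing a short window of denominator terms around $m_\ast(t)$, or otherwise sharpening the denominator estimate along the lines of Lemma \ref{lm:run4}, should bring the two constants into strict compatibility and complete the argument.
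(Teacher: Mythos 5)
Your proposal tracks the key issue that the paper's proof glosses over: the conditional expectation $E\bigl[(2^{\tilde N+1})^r \mid t\bigr]$ is a genuine function of $t$, and $t = L(\Gamma(\kappa))$ is random, so one still has to integrate out $t$. The paper's final ratio
\[
\frac{\sum_{m\geq 0}\exp\bigl(-2^{-m-5}a^{2}(2^{m}+t)^{2\alpha}\bigr)}{\sum_{m\geq 0}2^{m}\exp\bigl(-2^{-m-4}a^{2}(2^{m}+t)^{2\alpha}\bigr)}
\]
is finite for each fixed $t$ but is \emph{not} bounded uniformly in $t$ (a Laplace-type estimate around $2^m\approx t/(2\alpha-1)$ shows it blows up like $t^{-1}\exp(ct^{2\alpha-1})$ as $t\to\infty$), so the ``$<\infty$'' at the end of the paper's proof only establishes conditional finiteness, not the lemma as stated. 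You are right to flag this.

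However, the route you take to repair it is too lossy to close. By uniformly bounding the numerator in $t$ while retaining the exponential decay in the denominator, you produce a conditional bound of size $\exp(c_1 t^{2\alpha-1})/t$, which demands the exponential moment $E[\exp(c_1 L(\Gamma(\kappa))^{2\alpha-1})]<\infty$; as you note, the naive constants come out in the wrong order ($c_1 \approx 9^\alpha a^2/16$ from the denominator, versus $a^2/100$ from the tail bound for $L(\Gamma(\kappa))-\Gamma(\kappa)$), and simply sharpening the single-term lower bound on the denominator cannot reverse this, because a sharper analysis of the \emph{denominator alone} (Laplace point at $2^m\approx t/(2\alpha-1)$) gives exponent $c_0 = (2\alpha-1)2^{-4}a^2\bigl(2\alpha/(2\alpha-1)\bigr)^{2\alpha}$, which is still at least $a^2 2^{-4}$ and thus still exceeds $a^2/100$. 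The mismatch is structural, not a matter of window width.

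The piece you are missing is that the numerator and denominator share the \emph{same} exponential-in-$t$ factor, so it cancels and the conditional expectation is actually only polynomial in $t$. Concretely, write $E\bigl[(2^{\tilde N+1})^r\mid t\bigr]=\sum_n 2^{(n+1)r}p_t(n)$ and split at $n_0=\lfloor\log_2 t\rfloor$: for $n\leq n_0$ one has $2^{(n+1)r}\leq (2t)^r$ and $\sum_n p_t(n)=1$; for $n>n_0$ one lower bounds $Z_t:=\sum_m 2^m\exp(-g_t(m))$ by its $n_0$-th term and uses $g_t(n)\geq a^2 2^{(2\alpha-1)n-4}$ to see that after a finite window $n_0<n<n_0+K$ (which again contributes only $O(t^r)$), the tail $\sum_{n\geq n_0+K}2^{(n+1)r}p_t(n)$ is $O(1/t)$. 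Altogether $E\bigl[(2^{\tilde N+1})^r\mid L(\Gamma(\kappa))=t\bigr]=O(t^r)$, and now Lemma~\ref{lm:run4}, which gives $E[L(\Gamma(\kappa))^r]<\infty$ for every $r$, finishes the argument directly, with no exponential moments and no constant matching needed. This is both the cleaner closure and, I would argue, what the paper's proof implicitly intends: the polynomial blow-up in $t$ is benign precisely because Lemma~\ref{lm:run4} controls all polynomial moments of $L(\Gamma(\kappa))$.
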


\begin{proof}
The length of the path in Procedure B is bounded by $2^{\tilde N(t)+1}$, with
$\tilde N(t)$ sampled from \eqref{eq2}. For any $r>0$,
\begin{align*}
E[2^{(\tilde N(t)+1)r}]  &  =\frac{\sum_{m=0}^{\infty}2^{(m+1)r}2^{m}
\exp\left(  -2^{-m-4}a^{2} (2^{m}+t)^{2\alpha} \right)  }{\sum_{m=0}^{\infty
}2^{m} \exp\left(  -2^{-m-4}a^{2} (2^{m}+t)^{2\alpha} \right)  }\\
&  = \frac{\sum_{m=0}^{\infty}\exp\left(  -2^{-n-4}a^{2} (2^{m}+t)^{2\alpha}
+(mr+m+r)\log2\right)  }{\sum_{m=0}^{\infty}2^{m} \exp\left(  -2^{-n-4}a^{2}
(2^{m}+t)^{2\alpha} \right)  }.
\end{align*}
As for $m$ sufficiently large,
\[
\exp\left(  -2^{-m-4}a^{2} (2^{m}+t)^{2\alpha} +(mr+m+r)\log2\right)  <
\exp\left(  -2^{-m-5}a^{2} (2^{m}+t)^{2\alpha} \right)  ,
\]
for fixed $r>0$, $\exists C>0$, such that
\[
E[2^{(\tilde N(t)+1)r}] \le C \frac{\sum_{m=0}^{\infty}\exp\left(
-2^{-m-5}a^{2} (2^{m}+t)^{2\alpha} \right)  }{\sum_{m=0}^{\infty}2^{m}
\exp\left(  -2^{-m-4}a^{2} (2^{m}+t)^{2\alpha} \right)  } < \infty.
\]

\end{proof}

\subsection{Numerical experiments}

\label{sec:num} In this section, we conduct numerical experiments to analyze
the performance of Algorithm 1 for different values of parameters. We will
also conduct a sanity check of the correctness of our algorithms (empirically)
by simulating the steady state departure process of an infinite server
queueing model. The details of the simulation of the infinite server queue
will be given in Section \ref{sec:queue}.

\subsubsection{Choice of parameter $a$}

\label{sec:num1}  In Remark~\ref{rem_ab} we briefly discuss how the parameters $a$ and $b$ would affect
the performance of Algorithm 1. We shall fix the value of $b$ upon
our choice of $a$ as in \eqref{eq:par}, as we want to guarantee that probability of record-breaking is small enough, while keeping $\Gamma(\kappa)$ as small as possible.
In this subsection, we conduct some simulation experiments to analyze the effect of different values of $a$ on the computational cost.  
We first notice that the choice of $a$ and $b$ will affect the
distribution of $N$, which is the length of trajectory generated in Procedure
A. In Procedure B, the value of $\Gamma(\kappa)$, $L(\Gamma(\kappa))$ and the
distribution of $\tilde{N}$ also depend on the value of $a$ and $b$. 

Let $X_{i} \,{\buildrel d \over =}\, X-1$, where
$X$ is a unit rate exponential random variable. 
Then $\psi(\theta)=-\theta-\log(1-\theta)$, for $\theta<1$.
Let $g(\theta):=\psi(\theta)-\theta^{2}$. As $g^{\prime}(0)=0$,
$g^{\prime\prime}(\theta)=\frac{1}{(1-\theta)^{2}}-2$, we have
\[
g(\theta)<0 \;\;\;\; \forall\theta\in(-1, 1-\frac{\sqrt{2}}{2}).
\]
Therefore, we can set $\delta^{\prime}=1-\frac{\sqrt{2}}{2}$, and when $\theta
\in(-\delta^{\prime},\delta^{\prime})$, $\psi(\theta)<\theta^{2}$. According to \eqref{eq:par},
$a<\min(\frac{1}{2}, 4\delta^{\prime})=\frac{1}{2}$. We ran Algorithm 1 with
different values of $a$ and $\alpha$. Table \ref{tab:a} summarizes the running
time of the algorithm in different settings.

\begin{table}[h]
\caption{Running time of Algorithm 1 (in seconds)}%
\label{tab:a}%
\centering
\begin{tabular}
[c]{|l|l|l|l|l|}\hline
a & $\alpha=0.8$ & $\alpha=0.85$ & $\alpha=0.9$ & $\alpha=0.95$\\\hline
0.1 & 287.58 & 39.62 & 10.20 & 4.99\\
0.2 & 36.24 & 8.11 & 4.19 & 3.15\\
0.3 & 13.38 & 5.03 & 2.94 & 2.56\\
0.4 & 7.90 & 3.53 & 2.41 & 2.25\\
0.45 & 7.06 & 3.31 & 2.43 & 2.15\\\hline
\end{tabular}
\end{table}We observe that while $a$ is away from the upper bound $\frac{1}%
{2}$, the running time decreases as $a$ increases. We also observe that the
decreasing rate in $a$ is larger for smaller values of $\alpha$, which in
general implies greater curvature of the nonlinear boundary.

\subsubsection{Departure process of an M/G/$\infty$ queue}

We apply a variation of Algorithm 1 to simulate the steady state departure
process of an infinite server queue whose service times have infinite mean.
The details of the algorithm will be explained in Section \ref{sec:queue}. In
particular, we consider an infinite server queue having Poisson arrival
process with rate $1$, and Pareto service time distributions with probability
density function (pdf)
\[
f(v)=\beta v^{-(\beta+1)}I\{v\geq1\},
\]
for $\beta\in(1/2,1)$. Notice that we already know the departure process of
this $M/G/\infty$ queue should also be Poisson process with rate $1$,
therefore, this numerical experiment would help us verify the correctness of
our algorithm.

We sample the arrival process using Procedure A' and Procedure B' described in
Section \ref{sec:queue}, which are modifications of Procedure A and Procedure
B to adapt for the absolute value of the random walk. We truncate the length
of path at $10^{6}$ steps. We tried different pairs of parameters $\alpha$ and
$\beta$, and executed 1000 trials for each pair of $\alpha$ and $\beta$. We
count the number of departures between time 0 to 1 for each run and
construct the corresponding relative frequency bar plot. (Figure
\ref{fig:hist}). Figure \ref{fig:hist} suggests that the distribution of
simulated departures between time 0 and 1 indeed follows a Poisson distribution with rate
1. In particular, the distribution is independent of the values of $\alpha$
and $\beta$, which is consistent with what we expected. { We also conduct goodness of fit tests with the four groups of sampled data. The p-values for the tests are 0.2404, 0.2589, 0.4835, and 0.1137 respectively. Therefore the tests fail to reject that the generated samples are Poisson distributed.}

\begin{figure}[tbh]
\caption{Histograms comparison for sampled departure}%
\label{fig:hist}%
\centering
\begin{minipage}{.5\textwidth}
\centering
\includegraphics[width=1\linewidth]{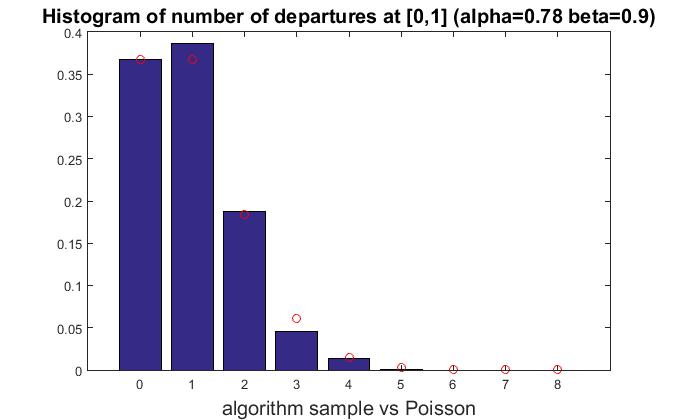}
\end{minipage}\begin{minipage}{.5\textwidth}
\centering
\includegraphics[width=1\linewidth]{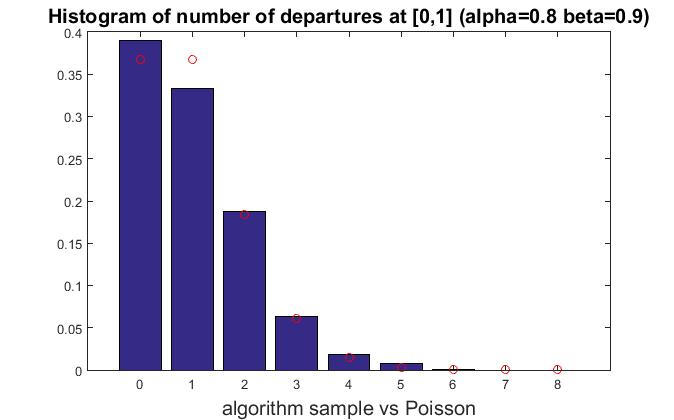}
\end{minipage}
\end{figure}

\begin{figure}[tbh]
\centering
\begin{minipage}{.5\textwidth}
\centering
\includegraphics[width=1\linewidth]{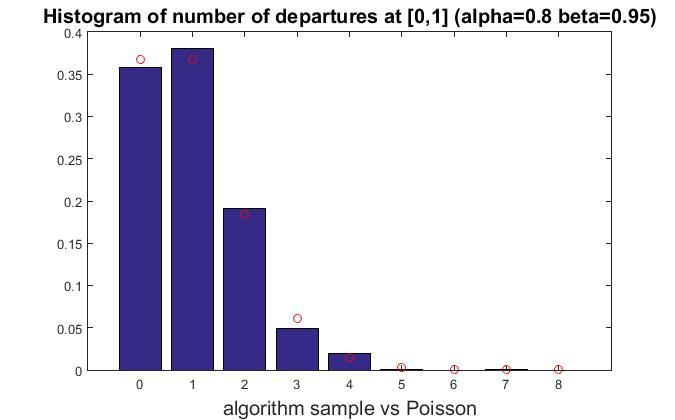}
\end{minipage}\begin{minipage}{.5\textwidth}
\centering
\includegraphics[width=1\linewidth]{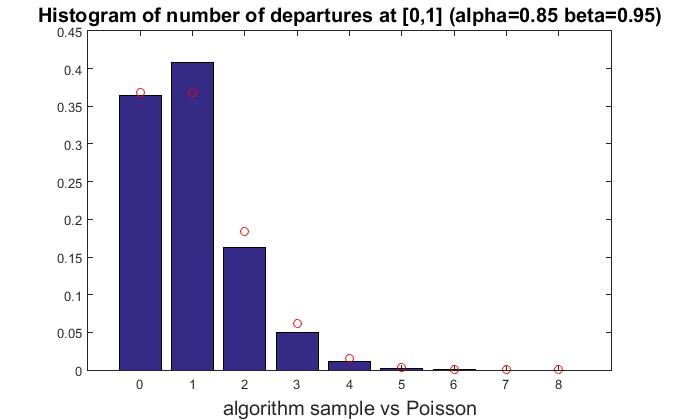}
\end{minipage}
\end{figure}

\section{Departure process of an infinite server queue}

\label{sec:queue} We finish the paper with an application of the algorithm
developed in Section \ref{sec:main} to sample the steady-state departure
process of an infinite server queue with general interarrival time and service
time distributions. We assume the interarrival times are i.i.d..
Independent of the arrival process, the service times are also i.i.d. and
may have infinite mean.

Suppose the system starts operating from the infinite past, then it would be
at stationarity at time $0$. We want to sample all the departures in the
interval $[0,h]$. We next introduce a point process representation of infinite
server queue to facilitate the explanation of simulation strategy. In
particular, we mark each arriving customer as a point in a 2-dimensional
space, where the $x$-coordinate records its arrival time and the
$y$-coordinate records its service time (service requirement). Based on this
point process representation, Figure \ref{fig:inf_main} provides a graphical
representation of the region that we are interested in simulation.
Specifically, to sample the departure process on $[0,h]$, we want to sample
all the points (customers) that fall into the shaded area. We further divide
the shaded area into two part, namely $H$ and $G$. Sampling the points that
fall into $G$ is easy. As $G$ is a bounded area, we can simply sample all the
arrivals between $0$ and $h$, and decide, using their service time
information, whether they fall into region $G$ or not. The challenge lies in
sampling the points in $H$, as it is an unbounded region.

\begin{figure}[tbh]
\caption{Point process representation of infinite server queue}%
\label{fig:inf_main}%
\centering
\includegraphics[width=0.5\textwidth]{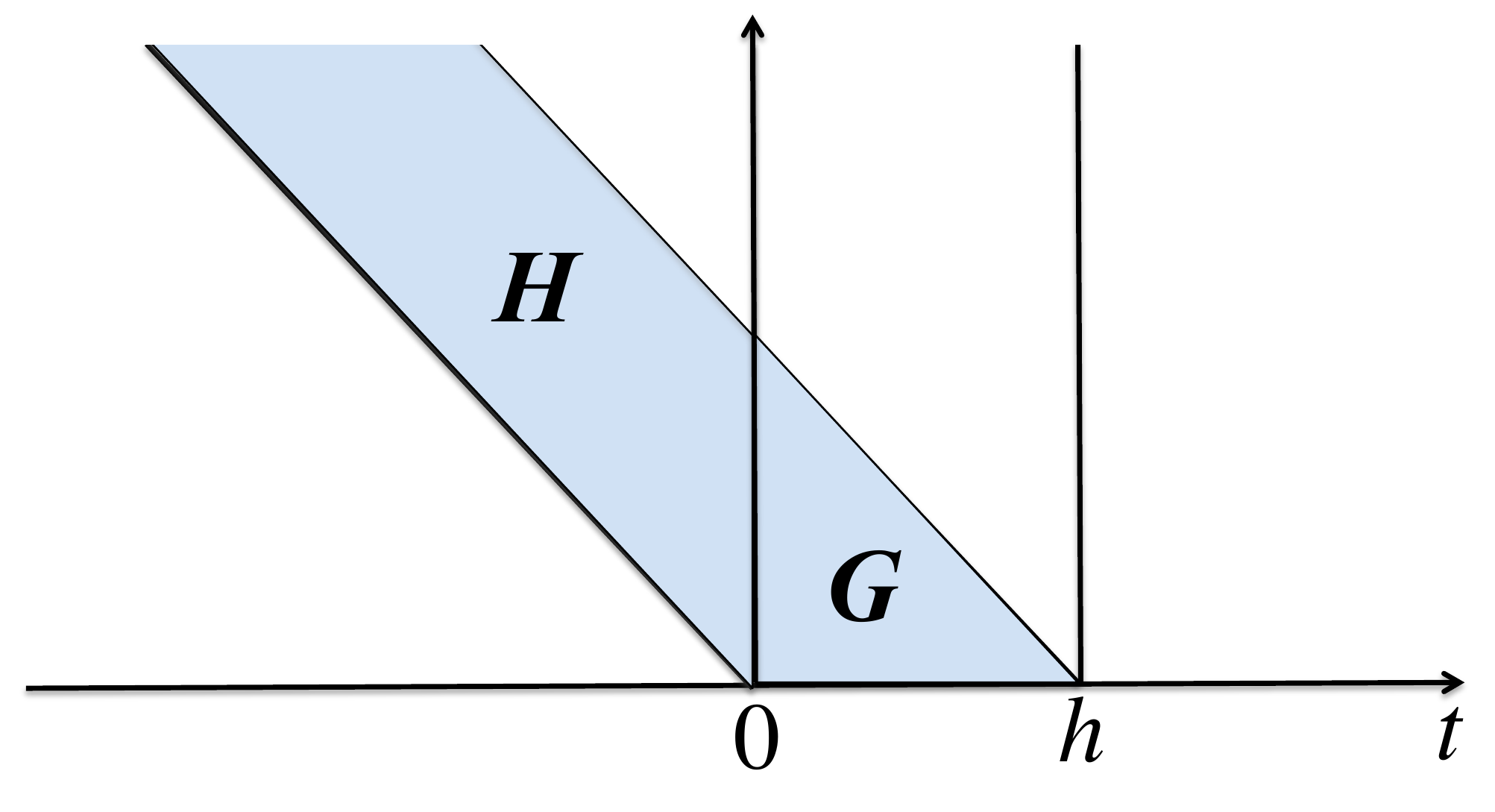}\end{figure}

We mark the points sequentially (according to their arrival times) backwards in
time from time $0$ as $(-A_{1}, V_{1})$, $(-A_{2}, V_{2})$, $\dots$, where
$-A_{n}$ is the arrival time of the $n$-th arrival counting backwards in time
and $V_{n} $ is his service time. Let $A_{0}:=0$. We then denote
$X_{n}:=A_{n-1}-A_{n}$, as the interarrival time between the $n$-th arrival
and the $(n-1)$-th arrival.

For simplicity of notation, we write
\[
\mathcal{H}:=\{(-A_{n}, V_{n}): A_{n}<V_{n}<A_{n}+h\}.
\]
It is the collection of points that fall into region $H$. If we can find a
random number $\Xi$ such that
\[
V_{n} < A_{n} \mbox{ or } V_{n} > A_{n}+h
\]
for $n \geq\Xi$, then we can sample the point process up to $\Xi$ and find
$\mathcal{H}$.

We further introduce an idea to separate the simulation of the arrival process
and the service time process. It requires us to find a sequence of
$\{\epsilon_{n}: n \geq1\}$, satisfying the following two properties:

\begin{enumerate}
\item There exists a well-defined random number $\Xi_{1}$, such that
\[
n\mu-\epsilon_{n}<A_{n}<n\mu+\epsilon_{n} \mbox{ for all } n \geq\Xi_{1}.%
\]

\item There exists a well-defined random number $\Xi_{2}$, such that
\[
V_{n} < n\mu-\epsilon_{n} \mbox{ or } V_{n} > n\mu+\epsilon_{n}+h
\mbox{ for all } n \geq\Xi_{2}.%
\]

\end{enumerate}

This allows us to find $\Xi_{1}$ and $\Xi_{2}$ separately and set $\Xi
=\max\{\Xi_{1},\Xi_{2}\}$. In particular, we can choose $\epsilon_{n}$
satisfying the following two conditions:

\begin{enumerate}
\item[C1)] $\sum_{n=1}^{\infty} P\left(  \left\vert A_{n}- n\mu\right\vert
>\epsilon_{n}\right)  <\infty$,

\item[C2)] $\sum_{n=1}^{\infty} P(V_{n}\in(n\mu-\epsilon_{n}, n\mu
+\epsilon_{n}+h))<\infty$.
\end{enumerate}

Then Borel-Cantelli Lemma guarantees that $\Xi_{1}$ and $\Xi_{2}$ are
``well-defined", i.e. finite almost surely.

We next introduce a specific choice of $\epsilon_{n}$ when the service
times follow a Pareto distribution with shape parameter $\beta\in
(1/2,1)$. We denote the pdf of $V_i$ as $f(\cdot)$, which takes the form
\begin{equation}
\label{eq:pareto}f(v)=\beta v^{-(\beta+1)}I\{v \geq1\}.
\end{equation}
We also write $\bar F(\cdot)$ as the tail cdf of $V_i$. We assume the
interarrival time has finite moment generating function in a neighborhood of
the origin. This is without loss of generality. Because if the interarrival
time is heavy-tailed, we can simulate a coupled infinite server queue with
truncated interarrival times, $X_{i}^{C}=\min\{X_{i}, C\}$, then it would
serve as an upper bound (in terms of the number of departures) of the original
infinite server queue in a path-by-path sense. Let $\mu:=E[X]$ denote the mean
interarrival time and $\sigma^{2}:=Var(X)$ denote its variance.

In this case, we can set $\epsilon_{n}=n^{\alpha}$ for $1/2<\alpha<\beta$. We
next show that our choice of $\epsilon_{n}$ satisfies C1) and C2)
respectively. We also explain how to find $\Xi_{1}$ and $\Xi_{2}$.

\subsection{Sampling of the arrival process and $\Xi_{1}$}

\begin{lemma}
\label{lm:bound1} If $\epsilon_{n}=n^{\alpha}$ for $\alpha>1/2$,
\[
\sum_{n=1}^{\infty} P\left(  \left\vert A_{n}- n\mu\right\vert >\epsilon
_{n}\right)  <\infty.
\]

\end{lemma}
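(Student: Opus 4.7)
The plan is to apply a standard Cramér-Chernoff bound to the centered random walk $A_n - n\mu = \sum_{i=1}^n (X_i - \mu)$. Since the interarrival time $X$ has a finite moment generating function in a neighborhood of the origin, the centered variable $Y = X - \mu$ also does, and expanding $\log E[e^{\theta Y}] = \tfrac{1}{2}\sigma^2 \theta^2 + o(\theta^2)$ around $\theta = 0$ gives constants $\delta_0 > 0$ and $C > 0$ such that $\log E[e^{\theta Y}] \leq C \theta^2$ for all $|\theta| < \delta_0$. The exact same argument applies to $-Y$, so it suffices to bound $P(A_n - n\mu > n^\alpha)$; the lower-tail bound $P(A_n - n\mu < -n^\alpha)$ follows by symmetry of the argument.

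First I would write, for any $\theta \in (0, \delta_0)$,
\[
P(A_n - n\mu > n^\alpha) \leq \exp\bigl(-\theta n^\alpha + n C \theta^2\bigr),
\]
which is just the exponential Markov inequality. Then I would pick the tilt $\theta_n = n^{\alpha - 1}/(2C)$, which is admissible (i.e.\ lies in $(0,\delta_0)$) for all sufficiently large $n$ because $\alpha < 1$ forces $\theta_n \to 0$. Substituting yields
\[
P(A_n - n\mu > n^\alpha) \leq \exp\Bigl(-\frac{n^{2\alpha - 1}}{4C}\Bigr).
\]

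Since $\alpha > 1/2$, the exponent $n^{2\alpha - 1}$ grows to infinity as a positive power of $n$, so the right-hand side decays faster than any polynomial and is summable. Combining with the symmetric lower-tail estimate gives
\[
\sum_{n=1}^\infty P\bigl(|A_n - n\mu| > n^\alpha\bigr) \;\leq\; 2 \sum_{n=1}^\infty \exp\Bigl(-\frac{n^{2\alpha - 1}}{4C}\Bigr) + (\text{finitely many initial terms}) \;<\; \infty,
\]
as required. I do not anticipate any real obstacle here: the only delicate point is checking that the chosen $\theta_n$ remains inside the domain of finiteness of the MGF, which is automatic because $\alpha - 1 < 0$ makes $\theta_n$ eventually small, and the finitely many early terms for which $\theta_n$ is not admissible contribute only a finite amount to the sum.
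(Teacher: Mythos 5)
Your argument is essentially sound and takes a more self-contained route than the paper. The paper dispatches the estimate by invoking a moderate deviation principle, citing \cite{Ganesh:2004} to get $\frac{1}{n^{2\alpha-1}}\log P(|A_n - n\mu| > n^\alpha) \to -\frac{1}{2\sigma^2}$ (the paper's displayed formula omits the $\log$, which appears to be a typo), and then summability follows since $2\alpha - 1 > 0$. You instead prove the summable upper bound $\exp(-c\, n^{2\alpha-1})$ directly from a Chernoff bound with tilt $\theta_n \asymp n^{\alpha-1}$; this is more elementary, avoids an external citation, and gives an explicit non-asymptotic tail bound rather than an asymptotic limit, which is arguably preferable for an algorithmic running-time argument. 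The two approaches buy essentially the same conclusion; yours is self-contained, the paper's is shorter on the page.

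There is, however, one point you glossed over that the paper flags explicitly: the increments of $A_n$ are \emph{not} i.i.d. Because the system is observed at stationarity, $X_1$ is the backward recurrence time, whose distribution (density $\bar F_X(x)/\mu$) differs from that of $X_2, X_3, \dots$. Your Chernoff computation treats all $n$ summands as i.i.d.\ copies of $X-\mu$. This is fixable — the backward recurrence time inherits a finite MGF near the origin from $X$ (its tail is dominated, up to a constant, by $\bar F_X$), so one can either condition on $X_1$ and absorb its contribution, or run the Chernoff bound on $A_n - X_1$ and note that the single extra light-tailed term does not affect the $\exp(-c\,n^{2\alpha-1})$ rate. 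But as written, the step ``apply the exponential Markov inequality to $\sum_{i=1}^n (X_i - \mu)$'' silently assumes a product structure that does not hold for $i=1$, and you should say a sentence to close that gap.
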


\begin{proof}
We notice that $A_{n}=\sum_{i=1}^{n}X_{i}$ is a random walk with $X_{i}$ being
i.i.d. interarrival times with mean $\mu$, \emph{except the first one}.
$X_{1}$ follows the backward recurrent time distribution of the interarrival
time distribution. By moderate deviation principle \cite{Ganesh:2004}, we
have
\[
\frac{1}{n^{2\alpha-1}}P(\left\vert A_{n}-n\mu\right\vert >n^{\alpha
})\rightarrow-\frac{1}{2\sigma^{2}}.%
\]
As $2\alpha-1>0$, $\sum_{n=1}^{\infty}P\left(  \left\vert A_{n}-n\mu
\right\vert >n^{\alpha}\right)  <\infty$.
\end{proof}

Let $S_{n}=T_{n}-n\mu$. We notice that both $S_{n}$ and $-S_{n}$ are mean zero
random walks.
\[
P(|S_{n}|>n^{\alpha})\leq P(S_{n}>n^{\alpha})+P(-S_{n}>n^{\alpha}).
\]
Thus, we can apply a modified version of Algorithm 1 to find $\Xi_{1}$. In
particular, we define a modified sequence of record-breaking times as follows.
Let $T_{0}^{\prime}:=0$. For $k\geq1$, if $T_{k-1}^{\prime}<\infty$,
\begin{align*}
T_{k}^{\prime}  &  :=\inf\left\{  n>T_{k-1}^{\prime}: S_{n}>S_{T_{k-1}%
^{\prime}}+a(n-T_{k-1}^{\prime})^{\alpha}+b(n- T_{k-1}^{\prime})^{1-\alpha
}\right. \\
&  \left.  \mbox{ or } S_{n}<S_{T_{k-1}^{\prime}}-a(n-T_{k-1}^{\prime
})^{\alpha}-b(n- T_{k-1}^{\prime})^{1-\alpha}\right\};
\end{align*}
else, $T_{k}^{\prime} =\infty$. { Then the modified version of Algorithm 1 goes as follows.\\

\noindent\textbf{Algorithm $1^{\prime}$.} Sampling $\Xi$ together with
$(X_{i}: 1\leq i\leq \Xi)$.

\begin{enumerate}
\item[i)] Initialize $T_{0}=0$, $k=1$.

\item[ii)] For $T_{k-1}<\infty$, sample $J \sim$Bernoulli$(P(T_{k}=\infty|T_{k-1}))$.
If $J=0$, sample $(X_{i}: i=T_{k-1}+1, \dots, T_{k})$ conditional
on $T_{k}<\infty$. Set $k=k+1$ and go back to step ii); otherwise ($J=1$), set
$\Xi_1=T_{k-1}$ and go to step iii). (see Procedure $A^{\prime}$)

\item[iii)] Apply Procedure $C$ (detailed in Section 4.2) to sample $\Xi_2$.

\item[iv)] Set $\Xi=\max\{\Xi_1, \Xi_2\}$.
If $\Xi>\Xi_1$, sample $(X_{i}: i=T_{k-1}+1, \dots, \Xi )$ 
conditional on $T_{k}=\infty$. (see Procedure $B^{\prime}$)
\end{enumerate}
}

We also modify Procedure A and Procedure B as
follows.\newline

\noindent\textbf{Procedure $A^{\prime}$.} Sampling $J^{\prime}$ with
$P(J^{\prime}=1)=P(T_{1}^{\prime}=\infty)$, if $J^{\prime}=0$, output $(X_{1},
\dots, X_{T_{1}^{\prime}})$.

\begin{enumerate}
\item[i)] Sample a random time $N$ with pmf \eqref{eq1}. Let $\theta
_{N}=a2^{N(\alpha-1)-2}$. Sample $U_{1}\sim$Uniform$[0,1]$. If $U_{1}\leq1/2$,
go to step ii a), else go to step ii b).

\item[ii a)] Generate $X_{1},\dots, X_{2^{N+1}-1}$ under exponential tilting
with tilting parameter $\theta_{N}$. Let
\[
T_{1}^{\prime} =\inf\{n\geq1: \left\vert S_{n} \right\vert > an^{\alpha
}+bn^{1-\alpha} \} \wedge2^{N}.%
\]
.

\item[ii b)] Generate $X_{1},\dots, X_{2^{N+1}-1}$ under exponential tilting
with tilting parameter $-\theta_{N}$. Let
\[
T_{1}^{\prime}=\inf\{n\geq1: \left\vert S_{n} \right\vert > an^{\alpha
}+bn^{1-\alpha} \} \wedge2^{N}.
\]

\item[iii)] Generate $U_{2}\sim$Uniform$[0,1]$. If
\[
U_{2}\leq\frac{\left(  \frac{1}{2}\exp(\theta_{N} S_{T_{1}^{\prime}}%
-\psi(\theta_{N}) T_{1}^{\prime}) +\frac{1}{2}\exp(-\theta_{N} S_{T_{1}%
^{\prime}}-\psi(-\theta_{N}) T_{1}^{\prime})\right)  ^{-1}}{p(N) }\times
I\left\{  T_{1}^{\prime}\in[2^{N}, 2^{N+1})\right\},
\]
then set $J^{\prime}=0$ and output $(X_{1}, X_{2}, \dots, X_{T_{1}^{\prime}}%
)$; else, set $J^{\prime}=1$.
\end{enumerate}

\bigskip

\begin{proposition}
\label{prop:step1} In Procedure $A^{\prime}$, $J^{\prime}$ is a Bernoulli
random variable with probability of success $P(T_{1}^{\prime}=\infty)$. If
$J=0 $, the output $(X_{1}, X_{2}, \dots, X_{T_{1}^{\prime}})$ follows the
distribution of $(X_{1}, X_{2}, \dots, X_{T_{1}^{\prime}})$ conditional on
$T_{1}^{\prime}<\infty$.
\end{proposition}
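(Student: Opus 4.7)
The proof plan is to follow the structure of Theorem \ref{th:step1}, but accommodating the two-sided record-breaking boundary and the symmetrized tilting. The key conceptual observation is that step ii) of Procedure $A'$ is an importance sampler whose proposal density is the equal mixture $\frac{1}{2} dP_{\theta_N} + \frac{1}{2} dP_{-\theta_N}$, whose Radon--Nikodym derivative with respect to $P$ is
\[
\frac{1}{2}\exp(\theta_N S_n - \psi(\theta_N) n) + \frac{1}{2}\exp(-\theta_N S_n - \psi(-\theta_N) n).
\]
So the ratio in step iii) is precisely $(dP/d(\text{mixture}))/p(N)$, which is exactly the correct likelihood-ratio weight for this symmetric importance sampling scheme.

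First I would verify that this weight is bounded by $1$ whenever $T_1' \in [2^N, 2^{N+1})$, i.e., whenever $|S_{T_1'}| > a (T_1')^{\alpha} + b (T_1')^{1-\alpha}$. On the event $S_{T_1'} > a(T_1')^{\alpha}+b(T_1')^{1-\alpha}$, the positive-tilt term dominates, and repeating the algebra from the proof of Lemma \ref{lm:kappa} (with $\theta_N = a 2^{(\alpha-1)N-2}$) gives
\[
\exp\bigl(\theta_N S_{T_1'} - \psi(\theta_N) T_1'\bigr) \geq \exp\bigl(a^2 2^{2N\alpha - N - 3} + ab/4\bigr).
\]
On the mirror event $S_{T_1'} < -(a(T_1')^{\alpha}+b(T_1')^{1-\alpha})$ the negative-tilt term dominates and satisfies the same bound (using that $\psi(-\theta_N) \leq \theta_N^2$ by the choice of $\delta'$). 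In either case the mixture is at least $\tfrac{1}{2}\exp(a^2 2^{2N\alpha-N-3}+ab/4)$, and plugging in the definition of $p(N)$ from \eqref{eq1} shows the acceptance ratio is at most $2Z\exp(-ab/4)/2^N \leq 1$ by the same choice of $b$ as in \eqref{eq:par}.

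Next, I would compute $E[I\{J'=0\}\mid N=n]$ by splitting on the coin flip $U_1$. For the branch $U_1 \leq 1/2$, changing measure from $P_{\theta_n}$ back to $P$ via $dP_{\theta_n}/dP = \exp(\theta_n S_{T_1'} - \psi(\theta_n) T_1')$ gives
\[
\frac{1}{p(n)} E\!\left[ \frac{\exp(\theta_n S_{T_1'} - \psi(\theta_n) T_1')}{\exp(\theta_n S_{T_1'}-\psi(\theta_n) T_1')+\exp(-\theta_n S_{T_1'}-\psi(-\theta_n) T_1')} I\{T_1' \in [2^n,2^{n+1})\}\right].
\]
The $U_1 > 1/2$ branch gives the symmetric expression with the other exponential in the numerator. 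The two numerators sum to the full denominator, so the two branches collapse to $P(T_1' \in [2^n,2^{n+1}))/p(n)$. Summing over $n$ via $p(n)$ yields $P(J'=0)=P(T_1'<\infty)$.

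For the conditional distribution claim, I would repeat the same mixture/change-of-measure calculation but insert the indicator $I\{X_1\in A_1,\dots,X_t\in A_t\}$, where $A_1,\dots,A_t$ are chosen compatibly with a first-hitting trajectory of the two-sided boundary at time $t$. The telescoping of the numerators against the mixture denominator eliminates the Radon--Nikodym weights entirely, leaving $P(X_1\in A_1,\dots,X_t\in A_t)/P(T_1'<\infty)$, which is the desired conditional law. The main obstacle is the bound in the first step, i.e., checking that the reuse of $a$ and $b$ from \eqref{eq:par} still produces a valid (probability-$\leq 1$) acceptance ratio after the factor of $2$ introduced by the mixture; the calculations above confirm that the slack in the choice of $b$ (the difference between the $2^{2n\alpha-n-3}$ appearing in the bound and the $2^{2n\alpha-n-4}$ appearing in the definition of $b$) absorbs this factor.
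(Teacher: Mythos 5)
Your proof is correct and is essentially the approach the paper has in mind; the paper omits this proof ("follows exactly the same line of analysis as the proof of Theorem \ref{th:step1}"), and your write-up correctly identifies the three adaptations required: interpreting the step ii) proposal as the equal mixture $\tfrac12 P_{\theta_N}+\tfrac12 P_{-\theta_N}$ so that the step iii) ratio is its reciprocal Radon--Nikodym weight divided by $p(N)$, using the symmetry of the bound $\psi(\pm\theta_N)\le\theta_N^2$ to handle both the upper and lower boundary-crossing events, and observing that the slack between the $2^{2n\alpha-n-3}$ in the tilted bound and the $2^{2n\alpha-n-4}$ in the definition of $b$ from \eqref{eq:par} absorbs the extra factor of $2$ coming from the mixture.
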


The proof of Proposition \ref{th:step1} follows exactly the same line of
analysis as the proof of Theorem \ref{th:step1}. We shall omit it
here.\newline


Let
\[
L^{\prime}(n)=\inf\left\{  k>n: S_{k} \in\left(  -\frac{a}{4}k^{\alpha},
\frac{a}{4} k^{\alpha}\right)  \mbox{ or } S_{k}>ak^{\alpha}+bk^{1-\alpha}
\mbox{ or } S_{k}<-ak^{\alpha}-b k^{1-\alpha}\right\}  .
\]

\bigskip

\noindent\textbf{Procedure $B^{\prime}$.} Sampling $(X_{1}, \dots, X_{n})$
conditional on $T_{1}^{\prime}=\infty$.

\begin{enumerate}
\item[i)] Sample $(X_{1}, \dots, X_{n})$ under the nominal distribution
$P(\cdot) $.

\item[ii)] If $\max_{1\leq k\leq n}\{S_{k}-ak^{\alpha}-bk^{1-\alpha}\}>0$ or
$\min_{1\leq k\leq n}\{S_{k}+ak^{\alpha}+bk^{1-\alpha}\}<0$, go back to step
i); else, go to step iii).

\item[iii)] Sample $L^{\prime}(n)$ and $(X_{n+1}, \dots, X_{L^{\prime}(n)})$
under the nominal distribution $P(\cdot)$. If $|S_{L^{\prime}(n)}|>aL^{\prime
}(n)^{\alpha}+bL^{\prime}(n)^{1-\alpha}$, go back to step i); else, go to step iv).

\item[iv)] Sample $\tilde N$ with probability mass function $p_{L(n)}$ defined
in \eqref{eq2}. Set $\tilde\theta_{\tilde N}=2^{\tilde N-2} a\left(  2^{\tilde
N}+L(n)\right)  ^{\alpha}$. Sample $U_{1}\sim$Uniform$[0,1]$. If $U_{1}<1/2$,
go to step v a); else, go to step v b).

\item[v a)] Generate $\tilde X_{1}, \tilde X_{2}, \dots, \tilde X_{2^{\tilde
N+1}-1}$ under exponential tilting with tilting parameter $\tilde
\theta_{\tilde N}$. Let
\[
\tilde T^{\prime}=\inf\left\{  n\geq1: \left\vert S_{L^{\prime}(n)}+\tilde
S_{k} \right\vert >a(k+L^{\prime}(n))^{\alpha}+b(k+L^{\prime}(n))^{1-\alpha
}\right\}  \wedge2^{\tilde N+1}.
\]

\item[v b)] Generate $\tilde X_{1}, \tilde X_{2}, \dots, \tilde X_{2^{\tilde
N+1}-1}$ under exponential tilting with tilting parameter $-\tilde
\theta_{\tilde N}$. Let
\[
\tilde T^{\prime}=\inf\left\{  n\geq1: \left\vert S_{L^{\prime}(n)}+\tilde
S_{k} \right\vert >a(k+L^{\prime}(n))^{\alpha}+b(k+L^{\prime}(n))^{1-\alpha
}\right\}  \wedge2^{\tilde N+1}.
\]

\item[vi)] Sample $U_{2}\sim$Uniform$[0,1]$. If
\[
U_{2}\leq\frac{\left(  \frac{1}{2} \exp\left(  \tilde\theta_{\tilde
N}\tilde S_{\tilde T^{\prime}}-\tilde\psi(\tilde\theta_{\tilde N}) \right)  +\frac
{1}{2}\exp\left(  -\tilde\theta_{\tilde N}\tilde S_{\tilde T^{\prime}} - \tilde
\psi(-\tilde\theta_{\tilde N})\right)  \right)  ^{-1}}{p_{t}(\tilde N)}\times
I\left\{  \tilde T^{\prime}\in\left[  2^{\tilde N}, 2^{\tilde N+1}\right)
\right\}, 
\]
set $\tilde J^{\prime}=0$ and go back to Step i); else, set $\tilde J^{\prime
}=1$ and output $(X_{1}, \dots, X_{n})$.\newline
\end{enumerate}

\bigskip

\begin{proposition}
\label{prop:step2} The output of Procedure $B^{\prime}$ follows the
distribution of $(X_{1}, \dots, X_{n})$ conditional on $T_{1}^{\prime}=\infty$.
\end{proposition}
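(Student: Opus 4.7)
The plan is to mirror the proof of Theorem \ref{th:step2} but with a symmetrized version of Lemma \ref{lm:Ber2} that handles the two-sided boundary produced by the absolute-value stopping rule defining $T_1'$ and $\tilde T'_{t,s} := \inf\{n \geq 0 : |s + S_n| > a(n+t)^\alpha + b(n+t)^{1-\alpha}\}$. The central observation is that steps iv)--v) of Procedure $B'$ sample from the mixture measure $Q_n := \tfrac{1}{2}P_{\tilde\theta_n} + \tfrac{1}{2}P_{-\tilde\theta_n}$, under which the Radon--Nikodym derivative $dP/dQ_n$ on any $k$-step path equals precisely the reciprocal expression appearing in the numerator of step vi). Thus step vi) is an importance-sampling--style acceptance step just as in Procedure B, with the mixture playing the role of the single tilted measure.

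The first step is to prove the two-sided analog of Lemma \ref{lm:Ber2}: under the hypothesis $|s| < \tfrac{a}{4}t^\alpha$ (guaranteed by the definition of $L'(n)$), $P(\tilde J' = 1) = P(\tilde T'_{t,s} = \infty)$. On the event $\{\tilde T' \in [2^n, 2^{n+1})\}$, either $s + S_{\tilde T'} > a(\tilde T'+t)^\alpha + b(\tilde T'+t)^{1-\alpha}$, or the symmetric inequality with a minus sign holds. In the first case, bounding the mixture denominator below by its positive-tilt term, $\tfrac{1}{2}\exp(\tilde\theta_n S_{\tilde T'} - \tilde T'\psi(\tilde\theta_n))$, reduces the acceptance probability to a quantity controlled exactly by the one-sided bound from Lemma \ref{lm:Ber2}, yielding $\leq 2 \times \tfrac{1}{4} = \tfrac{1}{2}$. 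The second case is handled identically by retaining the other mixture term; symmetry of $\psi$'s role in the construction and the choice of the tilting pair $\pm\tilde\theta_n$ make the exponential cancellations and the value $-\tfrac{ab}{4}$ reappear in the same form.

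With this bound in hand, conditioning on $\tilde N = n$ and computing $E_{Q_n}[(dP/dQ_n/p_t(n)) I\{\tilde T' \in [2^n, 2^{n+1})\}] = P(\tilde T'_{t,s} \in [2^n, 2^{n+1}))/p_t(n)$, then summing over $n$, yields $P(\tilde J' = 0) = P(\tilde T'_{t,s} < \infty)$; an analogous Radon--Nikodym computation, as in the second half of the proof of Theorem \ref{th:step1}, shows that the accepted path has the correct conditional law given $\tilde T'_{t,s} < \infty$. Combining with steps i)--iii) of Procedure $B'$: steps i)--ii) produce $(X_1,\ldots,X_n)$ conditional on $T_1' > n$ by rejection sampling; step iii) extends to $L'(n)$, the first time either $|S_k|$ drops below $\tfrac{a}{4}k^\alpha$ or the two-sided boundary is crossed, and rejection enforces the former. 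The strong Markov property then identifies $P(T_1' = \infty \mid \mathcal{F}_{L'(n)}, T_1' > L'(n))$ with $P(\tilde T'_{L'(n), S_{L'(n)}} = \infty)$, which the symmetrized Bernoulli delivers exactly, and the bookkeeping of Theorem \ref{th:step2} gives the claim.

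I expect the main obstacle to be the symmetrized bound in step vi): one must verify that in both sign-cases the dominant term of the mixture carries the full exponential weight needed to reproduce Lemma \ref{lm:Ber2}'s cancellation between $\tilde\theta_n S_{\tilde T'}$ and $\tilde T'\psi(\tilde\theta_n)$. This essentially amounts to checking that the exponential inequalities are preserved under $(\tilde\theta_n, S_{\tilde T'}) \mapsto (-\tilde\theta_n, -S_{\tilde T'})$, using $\psi(\theta) \leq \theta^2$ on $(-\delta', \delta')$, which is symmetric in $\theta$ so that the same upper bound $\exp(2^{n+1}\tilde\theta_n^2)$ applies to $\exp(\tilde T' \psi(-\tilde\theta_n))$. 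Once this symmetry is established, everything else reduces to routine repetition of the one-sided arguments.
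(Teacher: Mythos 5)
Your proposal is correct and takes essentially the same approach the paper intends: the paper states only that the proof ``follows exactly the same line of analysis as the proof of Theorem~\ref{th:step2},'' whereas you explicitly supply the two-sided analog of Lemma~\ref{lm:Ber2}, correctly identifying the Radon--Nikodym derivative with respect to the mixture $\tfrac12 P_{\tilde\theta_n}+\tfrac12 P_{-\tilde\theta_n}$ and the factor-of-$2$ loss (so the acceptance ratio is bounded by $1/2$ rather than $1/4$, still $\le 1$) from lower-bounding the mixture denominator by its dominant term in each sign case. The remaining bookkeeping (rejection up to $L'(n)$, strong Markov at $L'(n)$, and the conditional-law computation) is exactly the structure of the paper's proof of Theorem~\ref{th:step2}.
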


The proof of Proposition \ref{prop:step2} follows exactly the same line of
analysis as the proof of Theorem \ref{th:step2}. We shall omit it
here.\newline

\subsection{Sampling of the service time process and $\Xi_{2}$}

\begin{lemma}
\label{lm:bound2} If $\epsilon_{n}=n^{\alpha}$ for $1/2<\alpha<\beta$,
\[
\sum_{n=1}^{\infty} P(V_{n}\in(n\mu-\epsilon_{n}, n\mu+\epsilon_{n}%
+h))<\infty.
\]

\end{lemma}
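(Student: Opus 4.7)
The plan is to bound the probability $P(V_n \in (n\mu-\epsilon_n,\,n\mu+\epsilon_n+h))$ by a crude ``length $\times$ sup of density'' estimate and then observe that the resulting series is a convergent $p$-series. Since $V_n$ has density $f(v)=\beta v^{-(\beta+1)}$ on $[1,\infty)$, which is decreasing, the supremum of $f$ on any interval $[n\mu-n^\alpha,\,n\mu+n^\alpha+h]$ (for $n$ large enough that the left endpoint exceeds $1$) is attained at the left endpoint.

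First I would pick $n_0$ large enough that for all $n\geq n_0$ we have $n\mu - n^\alpha \geq \tfrac{\mu}{2}n$; this is possible because $\alpha<1$. For such $n$, the density at the left endpoint satisfies
\[
f(n\mu - n^\alpha) \;=\; \beta (n\mu - n^\alpha)^{-(\beta+1)} \;\leq\; \beta\bigl(\tfrac{\mu}{2}\bigr)^{-(\beta+1)} n^{-(\beta+1)}.
\]
Multiplying by the length $2n^\alpha + h$ of the interval gives
\[
P\bigl(V_n \in (n\mu-n^\alpha,\,n\mu+n^\alpha+h)\bigr) \;\leq\; (2n^\alpha+h)\cdot \beta\bigl(\tfrac{\mu}{2}\bigr)^{-(\beta+1)} n^{-(\beta+1)} \;=\; O\bigl(n^{\alpha-\beta-1}\bigr).
\]

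The assumption $\alpha<\beta$ yields $\alpha-\beta-1 < -1$, so $\sum_{n\geq n_0} n^{\alpha-\beta-1} < \infty$. The finitely many initial terms (for $n<n_0$) are bounded by $1$ each and contribute a finite amount, so the full series is finite. There is no real obstacle here; the only mild subtlety is ensuring the left endpoint is in the support of $f$ before invoking monotonicity of the density, which is handled by choosing $n_0$ as above. (One could alternatively evaluate the integral exactly as $(n\mu-n^\alpha)^{-\beta} - (n\mu+n^\alpha+h)^{-\beta}$ and apply the mean value theorem, arriving at the same rate.)
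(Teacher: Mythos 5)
Your proof is correct and follows essentially the same route as the paper's: both bound the interval probability by the interval length times the density at the left endpoint (the density being decreasing), yielding a term of order $n^{\alpha-\beta-1}$, and both conclude via $\beta+1-\alpha>1$. Your additional care in choosing $n_0$ so the left endpoint lies in the support of $f$ is a minor but sound refinement that the paper handles implicitly.
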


\begin{proof}%
\begin{align*}
P(V_{n}\in(n\mu-\epsilon_{n}, n\mu+\epsilon_{n}+h))  &  =\bar F(n\mu
-\epsilon_{n})-\bar F(n\mu+\epsilon_{n}+h)\\
&  \leq\frac{\beta}{(n\mu- n^{\alpha})^{(\beta+1)}} (2 n^{\alpha}+h)\\
&  =\frac{\beta(2+hn^{-\alpha})}{n^{\beta+1-\alpha}(\mu-n^{-(\beta-\alpha
)})^{\beta+1}}.
\end{align*}
As $\beta+1-\alpha>1$,
\[
\sum_{n=1}^{\infty} \frac{\beta(2+hn^{-\alpha})}{n^{\beta+1-\alpha}%
(\mu-n^{\alpha-\beta})^{\beta+1}}<\infty.
\]

\end{proof}

To find $\Xi_{2}$, we use a similar record-breaker idea. In particular, we say
$V_{n}$ is a record-breaker if
\[
V_{n} \in(n\mu- \epsilon, n\mu+\epsilon_{n}+h).
\]
The idea is to find the record-breakers sequentially until there
are no more record-breakers. Specifically, let $K_{0}:=0$ and if
$K_{i-1}<\infty$,
\[
K_{i}=\inf\{n>K_{i-1}: V_{n} \in(n\mu- \epsilon, n\mu+\epsilon_{n}+h)\}.
\]
if $K_{i-1}=\infty$, $K_{i}=\infty$. Let $\tau=\min\{ i>0: K_{i}=\infty\}$.
Then we can set $\Xi_{2}=K_{\tau-1}$.

The task now is to find $K_{i}$'s one by one. We start with $K_{1}$.
\[
P(K_{1}=\infty)=\prod_{n=1}^{\infty}\left(  1-P(V_{n}\in(n\mu-\epsilon_{n},
n\mu+\epsilon_{n}+h))\right).
\]

Let
\[
u(k)=\prod_{n=1}^{k}\left(  1-P(V_{n}\in(n\mu-\epsilon_{n}, n\mu+\epsilon
_{n}+h))\right).
\]
Then we have $P(K_{1}=\infty)<u(k+1)<u(k)$ for any $k\geq1$ and $\lim
_{k\rightarrow\infty}u(k)=P(K_{1}=\infty)$. We also notice that
$u(k)-u(k-1)=P(K_{1}=k)$.

From the proof of Lemma \ref{lm:bound2}, we have for $n>(2/\mu)^{1/(\beta
-\alpha)}$,
\[
P(V_{n}\in(n\mu-\epsilon_{n}, n\mu+\epsilon_{n}+h))<\frac{2(2+h)\beta}{\mu
}\frac{1}{n^{\beta+1-\alpha}}.%
\]
Then for $k^{*}$ large enough such that $k^{*}>(2/\mu)^{1/(\beta-\alpha)}$,
and $\frac{2(2+h)\beta}{\mu}\frac{1}{n^{\beta+1-\alpha}}<1$, we have for
$k>k^{*}$.
\begin{align*}
&  \prod_{n=k+1}^{\infty}\left(  1-P(V_{n}\in(n\mu-\epsilon_{n}, n\mu
+\epsilon_{n}+h))\right) \\
&  \geq\prod_{n=k+1}^{\infty}\left(  1-\frac{2(2+h)\beta}{\mu}\frac
{1}{n^{\beta+1-\alpha}}\right) \\
&  \geq\exp\left(  -\frac{(2+h)\beta}{\mu}\sum_{n=k+1}^{\infty}\frac
{1}{n^{\beta+1-\alpha}}\right) \\
&  \geq\exp\left(  -\frac{(2+h)\beta}{\mu} (k+1)^{-(\beta-\alpha)}\right).
\end{align*}
Let $l(k)=0$ for $k<k^{*}$ and
\[
l(k)=u(k)\exp\left(  -\frac{2(2+h)\beta}{\mu} (k+1)^{-(\beta-\alpha)}\right)
\]
for $k>k^{*}$. Then we have $l(k)\leq l(k+1)<P(K_{1}=\infty)$ and
$\lim_{k\rightarrow\infty}l(k)=P(K_{1}=\infty)$.

Similarly, given $K_{i-1}=m<\infty$, we can construct the sequences of upper
and lower bounds for $P(K_{i}=\infty|K_{i-1}=m)$ as
\[
u_{m}(k)=\prod_{n=m+1}^{k}\left(  1-P(V_{n}\in(n\mu-\epsilon_{n},
n\mu+\epsilon_{n}+h))\right)
\]
for $k > m$, and
\[
l_{m}(k)=u_{m}(k)\exp\left(  -\frac{(2+h)\beta}{\mu} (k+1)^{-(\beta-\alpha
)}\right)  .
\]

Based on the sequence of lower and upper bounds, given $K_{i-1}=m$, we can
sample $K_{i}$ using the following iterative procedure.\newline

\noindent\textbf{Procedure C.} Sample $K_{i}$ conditional on $K_{i-1}=m$.

\begin{enumerate}
\item[i)] Generate $U\sim$Uniform$[0,1]$. Set $k=m+1$. Calculate $u_{m}(k)$
and $l_{m}(k)$.

\item[ii)] While $l_{m}(k)<U<u_{m}(k)$\newline Set $k=k+1$. Update $u_{m}(k)$
and $l_{m}(k)$.\newline end While.

\item[iii)] If $U<l_{m}(k)$, output $K_{i}=\infty$; else, output $K_{i}=k$.
\end{enumerate}

Once we find the values of $K_{i}$'s, sampling $V_{n}$'s conditional on the
information of $K_{i}$'s is straightforward.

We next provide some comments about the running time of Procedure C.
Let $\Phi_i$ denote the number of iterations in Procedure C to generate $K_i$.
We shall show that while $P(\Phi_i<\infty)=1$, while $E[\Phi_i] =\infty$.
Take $\Phi_1$ as an example:
\begin{align*}
P( \Phi_1> n)&=P(K_1 > n)\\
&=P(l_1(n) <U<u_1(n))\\
&\ge u_1(n) \left(1- \exp\left(-\frac{2(2+h)\beta}{\mu} (n+1)^{-(\beta-\alpha)}\right)\right),\\
\end{align*}
with 
$$1- \exp\left(-\frac{2(2+h)\beta}{\mu} (n+1)^{-(\beta-\alpha)}\right)=O(n^{-(\beta-\alpha)}),$$ 
and $u_1(n) \ge P(K_1=\infty)$ for any $n\ge 1$. As $1<\beta-\alpha<1$, we have $P(K_1<\infty)=1$, but $\sum_{n=1}^\infty P(K_1 > n)=\infty$. Thus, $P(\Phi_1<\infty)=1$, but $E[\Phi_1]=\infty$.\\

The fact that the Procedure C has infinite expected termination time may be unavoidable in the following sense. In the absence of additional assumptions on the traffic feeding into the infinite server queue, any algorithm which simulates stationary departures during, say, time interval $[0,1]$, must be able to directly simulate the earliest arrival, from the infinite past, which departs in $[0,1]$. If such arrival is simulated sequentially backwards in time, we now argue that the expected time to detect such arrival must be infinite. Assuming, for simplicity, deterministic inter-arrival times equal to 1, and letting $-T<0$ be the time at which such earliest arrival occurs, then we have that
\begin{align*}
P(T > n) &\ge P(\cup_{k=n+1}^\infty \{V_k \in [k,k+1]\})\\
&\ge (1-P(V>n)) \sum_{k=n+1}^\infty P(V_k \in [k,k+1])\\
&= (1-P(V>n))P(V > n+1).
\end{align*}
As $\sum_{n=0}^\infty P(V>n) =\infty$, we must have that $E[T]=\infty$.

\begin{remark}
Based on our analysis above, in general, there is a trade-off between $\Xi
_{1}$ and $\Xi_{2}$ in terms of $\epsilon_{n}$. The smaller $\epsilon_{n}$ is,
the larger the value of $\Xi_{1}$ and the smaller the value of $\Xi_{2}$.
\end{remark}

\bibliographystyle{plain}
\bibliography{exact_ref}

\begin{thebibliography}{1}

\bibitem{Asm:2003}
S.~Asmussen.
\newblock {\em Applied Probability and Queues}.
\newblock Springer, 2 edition, 2003.

\bibitem{BlanChen:2012}
J.~Blanchet, X.~Chen, and J.~Dong.
\newblock Steady-state simulation of reflected brownian motion and related
  stochastic networks.
\newblock {\em The Annals of Applied Probability}, 25(6):3209–3250, 2015.

\bibitem{BlanDong:2014}
J.~Blanchet and J.~Dong.
\newblock Perfect sampling for infinite server and loss systems.
\newblock {\em Advances in Applied Probability}, 47(3):761--786, 2015.

\bibitem{BlanDonPei:2015}
J.~Blanchet, J.~Dong, and Y.~Pei.
\newblock Perfect sampling of gi/gi/c queues.
\newblock Working paper, available at http://arxiv.org/pdf/1508.02262v1.pdf.

\bibitem{BlanWall:2014}
J.~Blanchet and A.~Wallwater.
\newblock Exact sampling fot the steady-state waiting times of a heavy-tailed
  single server queue.
\newblock {\em ACM Transactions on Modeling and Computer Simulation (TOMACS)},
  25(4), 2015.

\bibitem{EnsGlyn:2000}
K.~Ensor and P.~Glynn.
\newblock Simulating the maximum of a random walk.
\newblock {\em Journal of Statistical Planning and Inference}, 85:127--135,
  2000.

\bibitem{Ganesh:2004}
A.~Ganesh, N.~O'Connell, and D.~Wischik.
\newblock {\em Big Queues}.
\newblock Lecture notes in Mathematics. Springer, Berlin, 2004.

\bibitem{Kendall:1998}
W.S. Kendall.
\newblock Perfect simulation for the area-interaction point process.
\newblock In {\em Probability towards 2000}, pages 218--234. Springer, 1998.

\bibitem{PropWil:1996}
J.~Propp and D.~Wilson.
\newblock Exact sampling with coupled {M}arkov chains and applications to
  statistical mechanics.
\newblock {\em Random Structures and Algorithms}, 9:223--252, 1996.

\end{thebibliography}

\end{document}